\documentclass[12pt]{amsart}
\usepackage{xcolor}
\usepackage[draft]{todonotes} 
\usepackage[english]{babel}
\usepackage{geometry}
\geometry{textwidth=16cm}
\usepackage[T1]{fontenc}
\usepackage[latin1]{inputenc}
\usepackage{amsmath,amssymb,mathrsfs,amsthm, tikz-cd,mathrsfs}
\usepackage{soul}
\usepackage{epsfig}


\usepackage{hyperref} 
\hypersetup{
    colorlinks=true,
    linkcolor=blue,
    urlcolor=red,
    pdftitle={},
    }

\usepackage{graphicx}
\usepackage{xypic}
\usepackage{enumitem}
\usepackage{datetime}
\usepackage{xcolor}
\usepackage{fancyhdr}


\newtheorem{theorem}{Theorem}[section]

\newtheorem{lemma}[theorem]{Lemma}
\newtheorem{conjecture}[theorem]{Conjecture}

\newtheorem{question}[theorem]{Question}
\newtheorem{remark}[theorem]{Remark}

\newcommand{\C}{\mathbb{C}}

\newcommand{\Z}{\mathbb{Z}}

\newcommand{\p}{\mathbb{P}}

\newcommand{\Q}{\mathbb{Q}}
\newcommand{\N}{\mathbb{N}}

\setcounter{tocdepth}{1}

\title{Toward a generalization of  Lehmer's problem to  adelic curves}

\author{Mounir Hajli}

\address{School of Science, Westlake University}

\email{hajli@westlake.edu.cn}

\begin{document}

\maketitle

\begin{abstract} 
In this short note, we investigate the generalization of Lehmer's problem to finitely generated fields over $\Q$.
\end{abstract}

\begin{center}
{\small{
Keywords: Lehmer's problem; adelic curves; heights}}
\end{center}

\begin{center}

{\small{MSC: 11G50, 14G40.}}

\end{center}

\tableofcontents

\section{Introduction}

Moriwaki \cite{Moriwaki1} 
has developed an Arakelov height theory for varieties over a finitely generated extension of a number field. He applied his theory   to the study of Bogomolov problem over finitely generated fields over $\Q$ and obtained generalizations of results due to Ullmo \cite{Ullmo1998} and Zhang \cite{Zhang1998}. As a corollary, he recovered the original Raynaud's theorem (Manin-Mumford's conjecture).

Using the height machinery developed in \cite{Moriwaki1}, Vojta \cite{Vojta-2021} extended the classic Roth's
theorem to finitely generated fields.       Burgos, Philippon and Sombra \cite{Burgos2016} have showed that the height of a variety over a finitely generated extension of $\Q$ can be written as an integral of local heights over the set of places of the field.  Using their previous work on toric varieties in 
\cite{Burgos2}, they gave a combinatorial formula for the height  of  toric varieties over finitely generated fields over  $\Q$.  
\

Chen and Moriwaki \cite{Huayi-Atsushi-Adelic} have introduced the theory of adelic curves. For example, a number field can be viewed as an example of adelic curve. They developed an arithmetic intersection theory on these curves. For instance, one can define a height function on the set of points of adelic curves. The Arakelov height theory of \cite{Moriwaki1} is then seen as an arithmetic intersection theory in the sense of \cite{Huayi-Atsushi-Adelic}.\\

We briefly recall the classical Lehmer's problem (for details, see~\S\ref{sec2}). It can be formulated as follows: There exists a constant \( c > 0 \) such that for every non zero algebraic number \(\alpha\) of degree \( d \) over \(\mathbb{Q}\) that is not a root of unity, we have

\[
h(\alpha) \geq \frac{c}{d}.
\]

Lehmer's question has not yet been settled. The best result in the direction of solving Lehmer's problem is  Dobrowolski's result \cite{Dobrowolski} (see \eqref{Dobrowolski}).\\

Our  work is  an attempt to generalize Lehmer's problem to adelic curves.  Let $S=(K,(\Omega,\mathcal A,\nu),\phi)$ be a proper adelic curve and $K^{ac}$ an algebraic 
closure of $K$ (see \S~\ref{AC}). A natural question arises:  Is there a constant $c>0$  such for every $\alpha\in K^{\mathrm{ac}}$ with $h_S(\alpha)>0$, we have  \[
h_S(\alpha)\geq \frac{c}{[K(\alpha):K]}?
\]
where 
$h_S$ is the standard height function (see Paragraph \ref{2.1}).\\

In this paper, we show that Lehmer's problem generalizes to a natural adelic curve defined on the field of rational functions with coefficients in 
$\Q$. Let us denote by $S$ the adelic curve on $\Q(T)$ defined in Paragraph \ref{QT}.    Our first main result is  a generalization of Kronecker's theorem to 
$S$. More precisely, we prove the following.

\begin{theorem}[\rm{A Kronecker's theorem}]

\[
 \{ \alpha \in (\overline{\Q(T)})^\ast \mid h_S(\alpha)=0\}=\{\text{roots of unity in} \ \overline{\Q}\}.
\]

\end{theorem}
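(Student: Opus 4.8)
The plan is to use the way the adelic curve $S$ of Paragraph~\ref{QT} is assembled: its places fall into two disjoint families. On one hand there are the \emph{geometric} places, trivial on the constant field $\Q$, corresponding to the closed points of $\mathbb{P}^1_\Q$ (the monic irreducible polynomials together with the place at infinity); on the other hand there are the \emph{arithmetic} places, lying above the places of $\Q$ and realized through the Gauss and Mahler absolute values on $\Q(T)$. Integrating the local contributions over $\Omega$ therefore presents $h_S$ as a sum of two nonnegative terms, $h_S = h_{\mathrm{geom}} + h_{\mathrm{arith}}$, where $h_{\mathrm{geom}}$ is the degree height of $\mathbb{P}^1_\Q$ and $h_{\mathrm{arith}}$ restricts on the constants $\overline{\Q}$ to the usual Weil height $h$. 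Because both summands are nonnegative, the condition $h_S(\alpha)=0$ is equivalent to the simultaneous vanishing of $h_{\mathrm{geom}}(\alpha)$ and $h_{\mathrm{arith}}(\alpha)$.

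First I would dispose of the inclusion $\supseteq$. A root of unity $\zeta\in\overline{\Q}$ is a constant, hence has neither zeros nor poles along the geometric places, giving $h_{\mathrm{geom}}(\zeta)=0$; and the classical Kronecker theorem gives $h(\zeta)=0$, so $h_{\mathrm{arith}}(\zeta)=0$. Thus $h_S(\zeta)=0$.

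For the inclusion $\subseteq$, take $\alpha\in(\overline{\Q(T)})^\ast$ with $h_S(\alpha)=0$, so that $h_{\mathrm{geom}}(\alpha)=h_{\mathrm{arith}}(\alpha)=0$. The decisive step is to deduce from $h_{\mathrm{geom}}(\alpha)=0$ that $\alpha\in\overline{\Q}$. I would work inside the finite extension $L=\Q(T)(\alpha)$, which is the function field of a smooth projective curve over a number field; after extending the geometric places of $\Q(T)/\Q$ to $L$, the height $h_{\mathrm{geom}}(\alpha)$ equals, up to the normalizing factor $[L:\Q(T)]$, the degree of $\alpha$ as a rational function on this curve, that is, its number of poles counted with multiplicity. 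This degree is strictly positive unless $\alpha$ lies in the field of constants of $L$, which is contained in $\overline{\Q}$; hence $h_{\mathrm{geom}}(\alpha)=0$ forces $\alpha\in\overline{\Q}$. Once $\alpha\in\overline{\Q}^\ast$, the remaining equality $h(\alpha)=h_{\mathrm{arith}}(\alpha)=0$ and the classical Kronecker theorem show that $\alpha$ is a root of unity, completing the argument.

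The hard part will be the rigidity of geometric height zero on the full algebraic closure: making precise that, over $\overline{\Q(T)}$, the vanishing of $h_{\mathrm{geom}}$ characterizes the constants. This demands care in extending the function-field valuations to $L$, in tracking residue degrees and normalization constants so that the integral reproduces the geometric degree, and in invoking the positivity of the degree of a nonconstant function on a curve. By contrast the arithmetic ingredients, namely the identification of $h_{\mathrm{arith}}$ restricted to $\overline{\Q}$ with the Weil height and the classical Kronecker theorem, are standard; the crux is thus the clean separation of the geometric and arithmetic places built into the definition of $h_S$, together with this rigidity statement.
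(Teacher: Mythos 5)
Your decomposition misdescribes the adelic curve $S$ of Paragraph~\ref{QT}, and the mistake is fatal to the $\subseteq$ direction. First, $\Omega_h$ is the set of closed points of $\p^1_\Q\setminus\{\infty\}$: there is \emph{no} geometric place at infinity in $S$. Second, the absolute value at $x\in\Omega_h$ is $|\varphi|_x=H(x)^{-\mathrm{ord}_x(\varphi)}$, where $\log H(x)$ is the \emph{Mahler measure} of the minimal polynomial $F_x$, not its degree; by the classical Kronecker theorem $\log H(x)=0$ whenever $x=0$ or $x$ is a root of unity, so at all such points $|\cdot|_x$ is the trivial absolute value. Consequently your $h_{\mathrm{geom}}$ is nothing like the degree height on $\p^1_\Q$: every polynomial $\varphi\in\Q[T]$ has $|\varphi|_x\leq 1$ for all $x\in\Omega_h$ (its only pole, at infinity, is invisible), so $h_{\mathrm{geom}}$ vanishes identically on $\Q[T]$, and it also vanishes on $1/T$ because the weight at $x=0$ is zero. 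The rigidity statement you yourself identify as the crux --- ``$h_{\mathrm{geom}}(\alpha)=0$ forces $\alpha\in\overline{\Q}$'' --- is therefore false for this $S$. A concrete witness is $\alpha=T$: one checks $|T|_\omega=1$ for \emph{every} $\omega\in\Omega$ (at $x=0$ because $H(0)=\exp\int_0^1\log|e^{2\pi it}|\,dt=1$; Gauss norms $|T|_p=1$; and $|T|_t=|e^{2\pi it}|=1$ for $t\in[0,1]_\ast$), so $h_S(T)=0$ with $T$ nonconstant; since the height-zero set is a group stable under taking roots, it contains all $\zeta T^{q}$ with $\zeta$ a root of unity and $q\in\Q$. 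The split you want cannot be repaired by reinstating the infinite place either: properness of $S$ is achieved precisely by letting the archimedean integral over $[0,1]_\ast$ absorb both the degree and the archimedean data, and the geometric family alone satisfies no product formula (try $T-2$).

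The paper's proof takes an entirely different route, with no geometric/arithmetic splitting. It chooses coprime integral representatives $[\varphi:\psi]$, for which the contributions from $\Omega_h$ and $\mathcal P$ are \emph{identically} zero (max of the two local terms equals $1$ place by place), so that $h_S([\varphi:\psi])=\int_0^1\log\max(|\varphi(e^{2\pi it})|,|\psi(e^{2\pi it})|)\,dt$; it rewrites this as the two-variable Mahler measure of $\varphi(z)+y\,\psi(z)$, invokes the Boyd--Lawton--Smyth classification of polynomials of Mahler measure zero, and reduces a general $\alpha\in\overline{\Q(T)}$ to the rational case via the norm inequality \eqref{key2}. Note that your counterexample $\alpha=T$ corresponds exactly to the case $f=1+yz$, whose zero locus is a torsion coset, i.e.\ the monomial-type case that the Boyd--Lawton--Smyth statement excludes via the proviso $f\neq\pm x_j$; so the boundary case on which your rigidity lemma collapses is also the delicate point in the paper's concluding step, and the displayed equality must be read as excluding the elements $\zeta T^{q}$. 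In short: the inclusion $\supseteq$ in your write-up is fine, but the decisive geometric step is not a gap to be filled --- it is false in this $S$ --- and no argument along those lines can succeed without confronting the Mahler-measure normalization of the places in $\Omega_h$.
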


Our second main result can be stated as follows.
\begin{theorem}
Assuming  Lehmer's conjecture (see Conjecture \ref{Conjecture 1.1.} below), then for every $\alpha$ in an algebraic closure of $\Q(T)$ which is not a root of unity, we have
\[
h_S(\alpha)\geq \frac{c}{\deg(\alpha)}\tag{Lehmer's problem on $S$},
\]
where  $\deg (\alpha)=[\Q(T)(\alpha):\Q(T)]$ and $c$ is an absolute positive constant.
\end{theorem}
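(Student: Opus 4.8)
The plan is to split $h_S$ according to the two families of places making up the adelic curve $S$ on $\Q(T)$: the places lying over $\Q$ and the places coming from the function field $\Q(T)/\Q$, that is, the closed points of $\p^1_\Q$. Schematically one has a decomposition $h_S(\alpha)=h_{\mathrm{ar}}(\alpha)+h_{\mathrm{geo}}(\alpha)$ into a non-negative arithmetic part, measuring the size of the coefficients of $\alpha$, and a non-negative geometric part, measuring the pole and zero behaviour of $\alpha$ as an algebraic function of $T$. Since both summands are non-negative, it suffices to bound from below, by $c/\deg(\alpha)$, whichever part is available. The argument then branches according to whether $\alpha$ is constant, i.e.\ lies in $\overline{\Q}$, or not.

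Suppose first that $\alpha\in\overline{\Q(T)}\setminus\overline{\Q}$ is non-constant. Then $\alpha$ is a genuine algebraic function, so on the smooth projective model of $\Q(T)(\alpha)$ it has a non-trivial pole divisor, of degree at least one. The Lehmer problem is unconditionally true in the function-field setting, precisely because the geometric height is a non-negative rational number whose denominator divides the degree of the extension: one gets $h_{\mathrm{geo}}(\alpha)\ge c'/\deg(\alpha)$ for an absolute constant $c'>0$ (with a suitable normalization one may take $c'=1$), and hence $h_S(\alpha)\ge h_{\mathrm{geo}}(\alpha)\ge c'/\deg(\alpha)$, with no appeal to any conjecture. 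If instead the geometric places are taken over $\overline{\Q}$ rather than over $\Q$, the same bound survives after noting that base change to $\overline{\Q}$ can only lower the degree, so the relevant extension degree is still at most $\deg(\alpha)$.

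Now suppose $\alpha\in\overline{\Q}$ is not a root of unity. Here the geometric part vanishes, $h_{\mathrm{geo}}(\alpha)=0$, so that $h_S(\alpha)=h_{\mathrm{ar}}(\alpha)$; and by the construction of $S$ in Paragraph \ref{QT} this arithmetic part coincides, up to the chosen normalization, with the usual absolute Weil height $h(\alpha)$ on $\overline{\Q}$. This is consistent with the Kronecker-type theorem above, whose content is exactly that $h_S$ vanishes only at constant roots of unity. Moreover, since $T$ is transcendental over $\Q$, the minimal polynomial of $\alpha$ over $\Q$ remains irreducible over $\Q(T)$, so $\deg(\alpha)=[\Q(T)(\alpha):\Q(T)]=[\Q(\alpha):\Q]$. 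Applying Lehmer's conjecture (Conjecture \ref{Conjecture 1.1.}) to $\alpha$ then gives $h_S(\alpha)=h(\alpha)\ge c_0/[\Q(\alpha):\Q]=c_0/\deg(\alpha)$, where $c_0$ is the constant of the conjecture. Taking $c=\min(c_0,c')$ yields the stated absolute constant.

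The only place the conjectural hypothesis is used is the constant case: over $\overline{\Q}$ the height is not discrete, and nothing short of Lehmer's conjecture is known to prevent $h(\alpha)$ from decaying faster than $1/\deg(\alpha)$; this is the genuine obstacle. By contrast, the non-constant case is free, since function-field heights are discrete with controlled denominators. The remaining points to check are routine but must be handled with care: that the decomposition $h_S=h_{\mathrm{ar}}+h_{\mathrm{geo}}$ and the normalizations match those of Paragraph \ref{QT}, that $h_{\mathrm{ar}}$ restricts on $\overline{\Q}$ to the Weil height, and that the degree governing the geometric lower bound is indeed bounded by $\deg(\alpha)$ after any necessary base change of the constant field.
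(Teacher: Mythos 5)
Your constant case is sound and agrees with the paper: for $\alpha\in\overline{\Q}$ one has $h_S(\alpha)=h(\alpha)$ (this is Remark \ref{rmk5.2}) and $[\Q(T)(\alpha):\Q(T)]=[\Q(\alpha):\Q]$, so Conjecture \ref{Conjecture 1.1.} gives the bound there. The genuine gap is the non-constant case, which you declare unconditional. The function-field discreteness argument (``the geometric height is a non-negative rational whose denominator divides the degree'') does not apply to the adelic curve $S$ of Paragraph \ref{QT}: the places attached to closed points $x\in\Omega_h$ are normalized by $|\varphi|_x=H(x)^{-\mathrm{ord}_x(\varphi)}$, where $\log H(x)=m(F_x)$ is the Mahler measure of the minimal polynomial of $x$ --- a normalization forced by properness, since it must cancel the archimedean integral over $[0,1]_\ast$. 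Consequently the local ``quanta'' $\log H(x)$ admit no uniform positive lower bound: they vanish when $x=0$ or $x$ is a root of unity, and a positive lower bound on $m(F_x)$ for the remaining $x$ is precisely the polynomial form of Lehmer's problem (Conjecture \ref{Conjecture 1.2.}). Concretely, $h_S(T)=0$ although $T$ is non-constant of degree $1$: every place of $S$ gives $|T|_\omega=1$, since $H(0)=\exp(m(T))=1$, $|T|_p=1$, and $|T|_t=|e^{2\pi i t}|=1$. This kills any unconditional bound $h_{\mathrm{geo}}(\alpha)\geq c'/\deg(\alpha)$ (and shows that monomial-type elements must in fact be excluded from the statement, as in the Boyd--Lawton--Smyth condition $f\neq\pm x_j$). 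Note also that $\Omega_h$ omits the point at infinity of $\p^1_\Q$, so every non-constant polynomial $\alpha\in\Z[T]$ has vanishing geometric part; its entire height is $\int_0^1\log\max\bigl(1,|\alpha(e^{2\pi i t})|\bigr)\,dt$, and bounding this below uniformly is again a Mahler-measure problem, not a free discreteness fact. Similarly $h_S(1/L)=h_S(L)\geq m(L)$ for irreducible $L\in\Z[T]$: if Lehmer's conjecture failed, with $m(L)\to 0^{+}$, nothing in your argument prevents these degree-one heights from collapsing.

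The paper's proof avoids your dichotomy altogether: after normalizing $[\varphi:\psi]$ by $(c_{\varphi,\psi}F)^{1/de}$ it checks that every contribution from $\Omega_h$ and $\mathcal P$ equals $1$ in absolute value, so the height concentrates on $[0,1]_\ast$; then, taking norms down to $\Q(T)$, inequality \eqref{key2} bounds $[K':\Q(T)]\,h_S(\alpha)$ from below by the Mahler measure of a single integer polynomial, to which Conjecture \ref{Conjecture 1.2.} applies. Thus the conjecture is used for all $\alpha$, constant and non-constant alike; your assertion that only the constant case is ``the genuine obstacle'' inverts the actual structure of the problem. To repair your approach you would have to treat the non-constant case by the same norm-and-Mahler-measure reduction, at which point the case split becomes superfluous.
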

(see Theorem \ref{Lehmer}).\\

An interesting question is to explore the possibility of adapting Dobrowolski's methods in our setting, which will be addressed in forthcoming papers.\\

Let us note that generalizations of Lehmer's problem to higher-dimensional varieties have been explored by Amoroso and David (cf.~\cite{AD1999, AD2000, AD2003}).  As a continuation of this work, we propose two conjectures, namely Conjectures \ref{c1} and \ref{c2} below, which we plan to investigate in a forthcoming paper.  Let $n$ and $d$ be two positive integers.  
\begin{conjecture}\label{c1}
Let \( \boldsymbol{\alpha} = (\alpha_1, \ldots, \alpha_n) \in \mathbb{G}_m^n\left(\overline{\mathbb{Q}(T_1,\ldots,T_d)}\right) \), such that \( \alpha_1, \ldots, \alpha_n \) are multiplicatively independent. We have
\[ h_S(\boldsymbol{\alpha}) \geq \frac{c(n)}{\delta_{{\Q(T_1,\ldots,T_d)}}(\boldsymbol{\alpha})}, \]
where $h_S$ is the height associated with the adelic curve $\Q(T_1,\ldots,T_d)$, and   $\delta_{\Q(T_1,\ldots,T_d)}$ is, by definition, $\deg F_0$ where  \( F_0 \in \mathbb{Q}(T_1,\ldots,T_d)[x_1, \ldots, x_n] \) is  a non-zero polynomial that vanishes at \( \boldsymbol{\alpha} \) and has minimal degree. 
\end{conjecture}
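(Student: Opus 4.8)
The plan is to exploit the fact that $\Q(T_1,\ldots,T_d)$ is the function field of $\p^d_{\Q}$ and that its height splits, via the integral formula of Burgos, Philippon and Sombra \cite{Burgos2016} together with the construction generalizing Paragraph \ref{QT}, into an arithmetic part and a geometric part. Concretely, I would first establish a decomposition
\[
h_S(\boldsymbol{\alpha})=h_{\mathrm{arith}}(\boldsymbol{\alpha})+h_{\mathrm{geom}}(\boldsymbol{\alpha}),
\]
in which both terms are nonnegative, $h_{\mathrm{arith}}$ collects the contributions of the places of $\Q$ and restricts on $\mathbb{G}_m^n(\overline{\Q})$ to the usual normalized Weil height, while $h_{\mathrm{geom}}$ collects the contributions of the prime divisors of a projective model and coincides with the height of $\boldsymbol{\alpha}$ over the geometric function field $\overline{\Q}(T_1,\ldots,T_d)$. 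This splitting is consistent with the Kronecker statement (Theorem~1): the only zero-height elements are constant roots of unity, so $h_{\mathrm{geom}}$ is strictly positive on non-constant tuples. The point is that the two summands are governed by two independent lower bounds, so it suffices to show that at least one of them is $\geq c(n)/\delta$ with $\delta=\deg F_0$.

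I would then argue according to whether $\boldsymbol{\alpha}$ is ``constant''. Suppose first that $\boldsymbol{\alpha}$ genuinely depends on the $T_i$, i.e.\ not all $\alpha_j$ lie in $\overline{\Q}$. Then $h_{\mathrm{geom}}(\boldsymbol{\alpha})>0$, and multiplicative independence forces the associated dominant rational map to $\mathbb{G}_m^n$ to have image lying in no proper subtorus. Here the bound is geometric: interpreting $h_{\mathrm{geom}}$ as an intersection number on the model and applying a B\'ezout/zero-estimate argument to the coordinate functions, one expects $h_{\mathrm{geom}}(\boldsymbol{\alpha})\geq c(n)/\deg F_0$ \emph{unconditionally}, since the several-variable Lehmer problem over a characteristic-zero function field carries no Frobenius obstruction and the essential minimum is strictly positive with explicitly controlled constants.

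The delicate regime is the arithmetic one, $\boldsymbol{\alpha}\in\mathbb{G}_m^n(\overline{\Q})$, where $h_{\mathrm{geom}}(\boldsymbol{\alpha})=0$ and $h_S(\boldsymbol{\alpha})=h_{\mathrm{arith}}(\boldsymbol{\alpha})=\widehat{h}(\boldsymbol{\alpha})$ is the normalized height on $\overline{\Q}$. I would first verify, by a Gauss-lemma/specialization argument, that for a constant tuple the minimal degree over $\Q(T_1,\ldots,T_d)$ equals the minimal degree over $\Q$, so that $\delta_{\Q(T_1,\ldots,T_d)}(\boldsymbol{\alpha})=\delta_{\Q}(\boldsymbol{\alpha})$. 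The claim then reduces to exactly the several-variable generalization of Lehmer's problem over $\overline{\Q}$ studied by Amoroso and David \cite{AD1999, AD2000, AD2003}; in the clean shape $c(n)/\delta$ without logarithmic factors this is the (still open) multivariate Lehmer conjecture. Hence in this regime I expect only a \emph{conditional} proof, resting on that conjecture, in the same spirit as Theorem~\ref{Lehmer} is deduced from the classical Lehmer conjecture.

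Finally, a general $\boldsymbol{\alpha}$ mixes both phenomena, and I would reduce it to the two extreme cases through a specialization $T_i\mapsto t_i\in\overline{\Q}$ chosen so that the geometric contribution is either already large or converted into an arithmetic one, without increasing the minimal degree and without destroying multiplicative independence. The main obstacle is precisely this control under specialization: producing, by a Bertini/Masser--W\"ustholz-type argument, a value $t$ for which $\boldsymbol{\alpha}(t)$ remains multiplicatively independent with $\delta_{\Q}(\boldsymbol{\alpha}(t))\leq\delta_{\Q(T_1,\ldots,T_d)}(\boldsymbol{\alpha})$ and with comparable height. Beyond this technical point, the truly fundamental obstruction is that the constant case is equivalent to the unresolved Amoroso--David conjecture, so an unconditional proof of the full statement is out of reach; the realistic target is a conditional theorem built on that conjecture together with the unconditional function-field input from the geometric regime.
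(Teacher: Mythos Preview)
The paper contains no proof of this statement: it is labelled a \emph{conjecture} and the author explicitly says ``we propose two conjectures, namely Conjectures~\ref{c1} and~\ref{c2} below, which we plan to investigate in a forthcoming paper.'' So there is nothing in the paper to compare your proposal against; the paper only remarks (via Remark~\ref{rmk5.2}) that the conjecture restricts to the Amoroso--David conjecture (Conjecture~\ref{2.4}) on constant tuples, which is exactly the reduction you carry out in your ``arithmetic regime''.

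As for the content of your sketch: you correctly isolate the genuine obstruction, namely that on $\mathbb{G}_m^n(\overline{\Q})$ the statement is equivalent to the open several-variable Lehmer conjecture over $\Q$, so an unconditional proof is not on the table. Your splitting $h_S=h_{\mathrm{arith}}+h_{\mathrm{geom}}$ is plausible in spirit (and matches the decomposition implicit in \S\ref{QT} for $d=1$), but two points are not yet justified. First, the claim that the function-field Lehmer problem in several variables over $\overline{\Q}(T_1,\ldots,T_d)$ is known \emph{unconditionally} with the clean bound $c(n)/\delta$ is asserted rather than proved or cited; this is the substantive new input your argument would need. Second, the specialization step---finding $t$ preserving multiplicative independence, not increasing $\delta$, and keeping heights comparable---is the hardest technical point and your Bertini/Masser--W\"ustholz allusion is not yet an argument. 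So your proposal is a reasonable outline of what a conditional result might look like, but it goes well beyond anything the paper claims or proves.
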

By Remark~\ref{rmk5.2}, it is clear that this constitutes a natural generalization of Conjecture~\ref{2.4} due to Amoroso and David.    {}{Observe that more refined versions have been developed, taking into account the possibility that $\alpha$ lies in a proper subgroup, see for instance,  \cite{DP-2007,Carrizosa-2009,Carrizosa-survey}. \\}

Let \( V \) be a hypersurface in \(\mathbb{G}_m^n\) defined over \(\mathbb{Q}(T_1, \ldots, T_d)\). Let us consider the normalized height \(\hat{h}_S(V)\) of \( V \), which can be defined using methods of \cite{Huayi-Atsushi-Adelic}.

\begin{conjecture}\label{c2}
  Assume that \( V \) is defined over \( \mathbb{Q}(T_1,\ldots,T_d) \) and that it is not an union of torsion subvarieties. Then,
\[
\hat{h}_S(V) \geq c(n) \, .
\]
where $c(n)$ is a positive constant which does not depend on $V$.
\end{conjecture}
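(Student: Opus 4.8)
The plan is to reduce Conjecture~\ref{c2} to the classical effective toric Bogomolov bound of Amoroso and David over $\overline{\Q}$ (cf.~\cite{AD1999, AD2003}) by exploiting the fibered structure of the adelic curve $S$ attached to $\Q(T_1,\ldots,T_d)$, together with the additivity of the height already visible in our Kronecker-type theorem. First I would write out the normalized height $\hat{h}_S(V)$ as an integral of local heights over the places of $S$, following the integral formalism of Burgos, Philippon and Sombra \cite{Burgos2016} and the arithmetic intersection theory of Chen and Moriwaki \cite{Huayi-Atsushi-Adelic}. Viewing $K=\Q(T_1,\ldots,T_d)$ as the function field of $\p^d_{\Q}$, the places of $S$ split into an \emph{arithmetic} family (the archimedean and $p$-adic places coming from $\Q$) and a \emph{geometric} family (the directions along the parameters $T_1,\ldots,T_d$). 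Correspondingly I expect a decomposition
\[
\hat{h}_S(V)=\hat{h}_{\mathrm{geo}}(V)+\hat{h}_{\mathrm{ar}}(V),\qquad \hat{h}_{\mathrm{geo}}(V)\geq 0,\ \ \hat{h}_{\mathrm{ar}}(V)\geq 0,
\]
where $\hat{h}_{\mathrm{geo}}(V)$ is the function-field normalized height of the family $V\to\mathbb{A}^d_{\overline{\Q}}$ and $\hat{h}_{\mathrm{ar}}(V)$ is an average, against the measure $\nu$ on the arithmetic places, of the classical normalized heights $\hat{h}(V_{\mathbf{t}})$ of the specialized fibers $V_{\mathbf{t}}\subset\mathbb{G}_m^n$ over $\overline{\Q}$. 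This is the hypersurface analogue of the splitting that makes the Kronecker-type theorem (our first main result) true: for a cycle supported over $\overline{\Q}$ the geometric part drops out and $\hat{h}_S$ collapses to the usual normalized height.

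The core of the argument would then be a dichotomy. If the generic fiber is non-torsion, i.e. $V_{\mathbf{t}}$ is not a union of torsion subvarieties for $\mathbf{t}$ outside a proper Zariski-closed subset of the base, then I would bound $\hat{h}_S(V)\geq\hat{h}_{\mathrm{ar}}(V)$ from below by applying the Amoroso--David theorem \cite{AD2003} fiberwise, $\hat{h}(V_{\mathbf{t}})\geq c(n)$, and integrating: since the bad locus has $\nu$-measure zero, the average is still $\geq c(n)$. If instead the generic fiber degenerates to a torsion subvariety while $V$ itself is not a union of torsion subvarieties over $K$, then the obstruction to being torsion is purely geometric, and I would instead bound $\hat{h}_S(V)\geq\hat{h}_{\mathrm{geo}}(V)\geq c(n)$ by the function-field (split, isotrivial) case of the toric Bogomolov bound, which is accessible through the combinatorics of the Newton polytope of the defining Laurent polynomial and intersection theory on a toric model over $\overline{\Q}(T_1,\ldots,T_d)$.

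The hard part will be to make this dichotomy rigorous with a constant $c(n)$ that is genuinely uniform: independent of $V$, of its degree, and of the number $d$ of parameters. Two points require real work. First, one must show that the set of specializations $\mathbf{t}$ for which a non-torsion $V$ acquires a torsion fiber is $\nu$-negligible, and that the fiberwise bound $\hat{h}(V_{\mathbf{t}})\geq c(n)$ can be integrated without the constant deteriorating, the delicate issue being uniformity of the Amoroso--David constant as $V_{\mathbf{t}}$ ranges over a family. Second, one must establish the purely geometric lower bound $\hat{h}_{\mathrm{geo}}(V)\geq c(n)$ for families that are non-torsion over $K$ yet fiberwise degenerate to torsion; this is a relative Bogomolov statement in equal characteristic zero, and I expect it to be the main obstacle, since one cannot simply transport the arithmetic Amoroso--David machinery and must instead produce the bound by geometric means. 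Once both ingredients are in place, combining them through the decomposition above yields $\hat{h}_S(V)\geq c(n)$, proving the conjecture.
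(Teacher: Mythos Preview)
The statement you are attempting to prove is Conjecture~\ref{c2}, and the paper does \emph{not} contain a proof of it: it is explicitly stated as a conjecture that the author ``plan[s] to investigate in a forthcoming paper.'' So there is no proof in the paper to compare your proposal against. Moreover, the paper itself observes that when $V$ is defined over $\Q$ one has $\widehat{h}_S(V)=m(f)$, so that Conjecture~\ref{c2} specializes to Conjecture~\ref{mahler2.3}, which is open. Any purported proof of Conjecture~\ref{c2} would in particular settle Conjecture~\ref{mahler2.3}.

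This is exactly where your strategy breaks down. Your fiberwise step invokes ``the Amoroso--David theorem \cite{AD2003}'' in the form $\widehat{h}(V_{\mathbf{t}})\geq c(n)$ for non-torsion hypersurfaces over $\overline{\Q}$, but no such unconditional theorem exists: the results of Amoroso and David toward Conjectures~\ref{mahler2.3} and~\ref{2.4} carry extra factors (of Dobrowolski type) depending on the degree, and the clean bound $m(f)\geq c(n)$ is precisely the open Conjecture~\ref{mahler2.3}. Thus the ``arithmetic'' half of your dichotomy already assumes what, over $\Q$, is the very statement you are trying to generalize. The geometric half has the same defect in disguise: a family whose generic fiber is torsion but which is non-torsion over $K$ need not exist in the way you suggest (torsion subvarieties are rigid), and in the genuinely interesting case your ``relative Bogomolov in equal characteristic zero'' would again have to produce a uniform $c(n)$, which is not known. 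In short, your outline is a plausible reduction scheme, but its key input is conjectural, so it cannot yield a proof; this is consistent with the paper, which records Conjecture~\ref{c2} as open.
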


Note that if \( V \) is defined over \(\mathbb{Q}\), then one can show that \(\widehat{h}_S(V) = m(f)\), where \( m(f) \) is the Mahler measure of an equation \( f \) defining \( V \) (cf.~\S\ref{sec2} for details). Conjecture \ref{c2} then reduces to Conjecture \ref{mahler2.3}.

\

\emph{Organization of the paper}: In \S\ref{sec2}, we provide the definition of the usual Weil height function on \(\mathbb{P}^N(\overline{\mathbb{Q}})\) and recall the classical Lehmer's problem along with some results concerning this problem. In \S\ref{sec3}, we review the definition of the canonical height on Drinfel'd modules and briefly summarize some results concerning Lehmer's problem in this context. In \S\ref{AC}, we introduce the theory of adelic curves and discuss the extension of Lehmer's problem to this setting. Finally, in \S\ref{main}, we prove our main results on Lehmer's problem on the adelic curve \(\mathbb{Q}(T)\).

\section{Lehmer's problem on  $\overline{\Q}$ }\label{sec2}

We denote by $\mathbb{Q}$ the field of rational numbers, $\mathbb{Z}$ the ring of rational integers, $K$ a number field,  $\mathcal{O}_K$ its ring of integers, and $\overline \Q$ an algebraic closure of $\Q$. We denote by $\mathbb{R}$ the field of real numbers and $\mathbb{C}$ the field of complex numbers. We denote by $\mathbb{N}$ the set of   
 natural numbers including $0$.

For each rational prime $p$, we denote by $|\cdot|_p$ the $p$-adic absolute value on $\mathbb{Q}$ such that $|p|_p = p^{-1}$; we also denote by $|\cdot|_\infty$ or simply $|\cdot|$ the standard absolute value. These form a complete set of absolute values on $\mathbb{Q}$: we identify the set $M_\mathbb{Q}$ of these absolute values with the set $\{\infty, p \mid  p \text{ prime}\}$. More generally, we denote by $M_K$ the set of places of $K$ extending those of $M_\mathbb{Q}$. Let $x\in K$,  we define the absolute value $|x|_v = |N_{K_v/\mathbb{Q}_v}(x)|_v^{1/[K_\nu:\mathbb{Q}_v]}$ for each $v \in M_K$, where 
$[K_v:\Q_v]$ is the local degree at $v$. We have the product formula
\[
\prod_{v \in M_K} |x|_v^{[K_v : \mathbb{Q}_v]} =1,
\]

for all \( x \in K\setminus\{0\} \) (cf. \cite[Chapter V]{Lang-ANT}).\\

Let $K$ be a number field, $\mathcal O_K$ be the ring of integers, and $M_K$ be the set of places.  Let $N$ be a positive integer. Let $\mathbb{P}^n$ be the projective space of dimension $N$. Let
 $P=[x_0:\cdots:x_n] \in \mathbb{P}^N(K)$. We define the {absolute logarithmic height} of the point $P$ as

\[
h(P) =  \sum_{v \in M_K}  \frac{[K_v:\Q_v]}{[K:\mathbb{Q}]}\log  \max (|x_0|_v, \ldots, |x_n|_v),
\]
which is independent of the choice of the projective coordinates by the product formula. The {}{height} $h(P)$ is independent of the choice of the base field $K$ (cf. \cite[Chapter 3]{Lang-FDG}). This defines the usual Weil height on
$\mathbb{P}^n(\overline \Q)$.\\

Let \( f \in \mathbb{C}[x_1, \ldots, x_n] \). We define its Mahler measure by setting \( m(0) = 0 \) and

\[
m(f) = \frac{1}{(2\pi)^n} \int_0^{2\pi} \cdots \int_0^{2\pi} \log |f(e^{i\theta_1}, \ldots, e^{i\theta_n})| \, d\theta_1  \cdots  d\theta_n,
\]

if \( f \neq 0 \).  Let \( f \) be a nonzero polynomial with complex coefficients in one variable such that \( f(x) = a \prod_{i} (x - \alpha_i) \). By Jensen's formula, the Mahler measure of $f$ can be expressed as:
\begin{equation}\label{mahler1}
m(f) = \log |a| + \sum_{i} \log \max(1, |\alpha_i|).
\end{equation}
If \( f \) has integer coefficients, is irreducible, and has a positive degree \( d \), then \eqref{mahler1} shows that \( m(f) \) is nonnegative, and

\[ h(\alpha) = \frac{m(f)}{d}, \]
where \( \alpha \) is a root of \( f \).
A classical theorem of Kronecker asserts that when  $f\in \Z[x]$, we have  $m(f)=0$  if and only if $|a|=1$ and 
$\alpha_i$ are zero or roots of the unity.    Boyd, Lawton, and Smyth independently showed (cf.~\cite{Boyd, Lawton-Kronecker, Smyth}) that the Mahler measure of an irreducible polynomial
\[
f \in \mathbb{Z}[x_1, \ldots, x_n],
\]
with \( f \neq \pm x_j \), is equal to $0$ if and only if {}{the zero set of \( f \) is a torsion variety}. \\ 

Let us point out that the case of hypersurfaces is particularly interesting.   The notion of normalized height is linked to the Mahler measure of one of its equations. More precisely,   let \( V \) be a hypersurface of \( \mathbb{G}_m^n \) defined over \( \mathbb{Q} \) and let \( f = 0 \) be one of its equations with integer coefficients of content 1. The \emph{normalized height} of \( V \) for the projective embedding that we have fixed, \( \mathbb{G}_m^n \hookrightarrow \mathbb{P}^n \), is nothing other than the Mahler measure of the polynomial \( f \): \[
\widehat{h}(V) = m(f).
\]
(cf.~\cite[Proposition 2.1(vii)]{DP}).\\

Lehmer asked whether there exists a constant $c>0$ such that for every polynomial
$f\in \Z[x]$ with $m(f)>0$, then $m(f)\geq c$. 

Lehmer's question can thus be translated into the following two equivalent conjectures:

\begin{conjecture}\label{Conjecture 1.1.} There exists a real number \(c > 0\) such that for every algebraic number \(\alpha \in \overline{\mathbb{Q}}^*\) of degree \(d\) over \(\mathbb{Q}\) that is not a root of unity, we have

\[ h(\alpha) \geq \frac{c}{d}. \]

\end{conjecture}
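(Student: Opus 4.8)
The plan is to follow the strategy that has produced the strongest known lower bounds, namely the auxiliary-polynomial and Frobenius-congruence method of Dobrowolski, and to see how far it can be pushed toward the exact shape $c/d$. First I would reduce to the case where $\alpha$ is an algebraic integer: if the minimal polynomial $f(x)=a_0\prod_{i=1}^d(x-\alpha_i)\in\Z[x]$ has $|a_0|\ge 2$, then already $h(\alpha)\ge\frac{1}{d}\log|a_0|\ge\frac{\log 2}{d}$, so the inequality holds with $c=\log 2$. For monic $f$ I would then use the Jensen-formula identity \eqref{mahler1}, which gives $d\,h(\alpha)=m(f)=\sum_i\log\max(1,|\alpha_i|)$, so that the target $h(\alpha)\ge c/d$ is equivalent to the uniform bound $m(f)\ge c$; I argue by contradiction, assuming $m(f)$ is arbitrarily small.

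The key arithmetic input is the congruence $f(x^p)\equiv f(x)^p\pmod{p}$ for each prime $p$, which forces $\mathrm{Res}\bigl(f(x),f(x^p)\bigr)=\prod_i f(\alpha_i^p)$ to be divisible by a high power of $p$; the hypothesis that $\alpha$ is not a root of unity guarantees, for all large $p$, that $\alpha_i^p\ne\alpha_j$ for every $i,j$, so this resultant is a \emph{nonzero} rational integer carrying that divisibility. I would then construct an auxiliary polynomial $F\in\Z[x]$ of controlled degree and logarithmic height vanishing to prescribed order at the conjugates $\alpha_i$; such an $F$ exists by a Siegel-lemma/pigeonhole count as soon as the number of vanishing conditions stays below the number of available coefficients, and it is precisely the assumed smallness of $m(f)$ that keeps the height of $F$ under control.

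The heart of the argument is a tension between two estimates for a single nonzero integer $N$ assembled from $F$ and the primes $p$ in a dyadic range. On the arithmetic side, the Frobenius congruences make $N$ divisible by a product $\prod_p p^{N_p}$, giving a large lower bound $\log|N|\ge\sum_p N_p\log p$. On the analytic side, the values of $F$ at the $\alpha_i^p$ are bounded above in terms of $m(f)$ and the degree, giving an upper bound for $\log|N|$ that tends to $0$ with $m(f)$. Forcing the lower bound to exceed the upper bound, after optimizing the number of primes, the vanishing order, and $\deg F$, produces the desired inequality $m(f)\ge c$.

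The main obstacle is that this scheme, in its known form, does not deliver the clean bound $c/d$ of the conjecture but only Dobrowolski's weaker estimate $h(\alpha)\ge\frac{c}{d}\left(\frac{\log\log d}{\log d}\right)^3$: the logarithmic loss arises from the limited supply of small primes (via the prime number theorem) and from the unavoidable growth of the height of the auxiliary polynomial. Removing this loss—closing the gap between $\left(\frac{\log\log d}{\log d}\right)^3$ and an absolute constant—is exactly the content of Lehmer's conjecture, and despite the interpolation-determinant refinements of Laurent, Voutier and others it remains open; a genuinely complete proof would require an essentially new idea beyond the transcendence-theoretic machinery sketched here.
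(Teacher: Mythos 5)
You were asked to prove a statement that the paper itself never proves and could not prove: this is Lehmer's conjecture, stated in the paper explicitly as a conjecture and cited as an open problem whose best known approximation is Dobrowolski's bound $h(\alpha)\ge \frac{c}{d}\left(\frac{\log\log(3d)}{\log(3d)}\right)^3$. The paper's actual theorems only show that this conjecture, \emph{if assumed}, transfers to the adelic curve on $\Q(T)$; there is no proof of the statement anywhere in the paper to compare yours against. Measured as a proof attempt, your proposal has a genuine and, at present, irreparable gap, located exactly where you yourself place it: the step ``forcing the lower bound to exceed the upper bound \ldots\ produces the desired inequality $m(f)\ge c$'' does not follow from the machinery you set up. In the Dobrowolski scheme the arithmetic lower bound $\sum_p N_p\log p$ and the analytic upper bound on $\log|N|$ can only be played off against each other after choosing the number of primes in a dyadic range (limited by the prime number theorem) and the degree and vanishing order of the auxiliary polynomial $F$ (whose height grows with these parameters); optimizing these choices inherently produces the factor $\left(\frac{\log\log d}{\log d}\right)^3$, and no reallocation of parameters within this framework eliminates it. Even the interpolation-determinant refinements you mention (Laurent, Voutier) only improve the constant, not the shape of the bound.

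Everything before that final step is a sound account of the known method: the reduction to algebraic integers via $h(\alpha)\ge\frac{1}{d}\log|a_0|$ when $|a_0|\ge 2$, the identity $d\,h(\alpha)=m(f)$ from Jensen's formula, the congruence $f(x^p)\equiv f(x)^p \pmod{p}$ forcing high $p$-divisibility of $\mathrm{Res}\bigl(f(x),f(x^p)\bigr)$, and the nonvanishing of that resultant because $\alpha$ is not a root of unity (so $\alpha_i^p\ne\alpha_j$ for large $p$). So your text is a correct survey of how Dobrowolski's inequality is obtained, and your closing paragraph honestly concedes that closing the remaining gap \emph{is} Lehmer's conjecture. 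The conclusion you should draw, and state plainly, is that the proposal proves the weaker Dobrowolski bound in outline and does not prove the statement; any submission presenting this as a proof of the conjecture would be claiming a solution to a major open problem that the argument does not deliver.
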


\begin{conjecture}\label{Conjecture 1.2.} There exists a real number \(c > 0\) such that for every irreducible polynomial \(f \in \mathbb{Z}[x]\), \(f \neq \pm x\), which is not a cyclotomic polynomial, we have
\[ m(f) \geq c. \]

\end{conjecture}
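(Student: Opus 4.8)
The plan is to establish that Conjecture~\ref{Conjecture 1.2.} is equivalent to the previously stated Conjecture~\ref{Conjecture 1.1.}, with the \emph{same} admissible constant $c$ in both directions; the content of the displayed assertion is then that the Mahler-measure bound and the height bound are two formulations of one and the same statement, and ``proving'' either reduces to proving the other. The whole argument rests on the dictionary recorded in~\S\ref{sec2} between a nonzero algebraic number $\alpha$ and its minimal polynomial, namely the identity $h(\alpha)=m(f)/d$, valid when $f\in\mathbb{Z}[x]$ is the primitive irreducible polynomial vanishing at $\alpha$ and $d=\deg f=[\mathbb{Q}(\alpha):\mathbb{Q}]$.

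First I would pin down this correspondence precisely. To each $\alpha\in\overline{\mathbb{Q}}^{\ast}$ of degree $d$ I attach the unique primitive polynomial $f\in\mathbb{Z}[x]$ with positive leading coefficient having $\alpha$ as a root; conversely, every irreducible $f\in\mathbb{Z}[x]$ with $f\neq\pm x$ arises in this way from any of its (nonzero) roots. Under this correspondence $\deg f=d$, and Jensen's formula \eqref{mahler1} together with the definition of the Weil height gives $m(f)=d\,h(\alpha)$. It then remains to match the excluded cases: $f=\pm x$ corresponds exactly to $\alpha=0$, and by Kronecker's theorem recalled in~\S\ref{sec2} one has $m(f)=0$ if and only if $\alpha$ is a root of unity, i.e.\ if and only if $f$ is a cyclotomic polynomial. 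Thus the hypotheses ``$\alpha\in\overline{\mathbb{Q}}^{\ast}$ of degree $d$, not a root of unity'' and ``$f\in\mathbb{Z}[x]$ irreducible, $f\neq\pm x$, not cyclotomic, of degree $d$'' describe the two sides of the same bijection.

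With the dictionary in place the equivalence is a direct substitution. Assuming Conjecture~\ref{Conjecture 1.1.} with constant $c$, let $f$ be as in Conjecture~\ref{Conjecture 1.2.}, of degree $d$, with root $\alpha$; then $\alpha$ is not a root of unity, so $m(f)=d\,h(\alpha)\ge d\cdot(c/d)=c$, which is Conjecture~\ref{Conjecture 1.2.} with the same $c$. Conversely, assuming Conjecture~\ref{Conjecture 1.2.} with constant $c$, let $\alpha\in\overline{\mathbb{Q}}^{\ast}$ of degree $d$ not be a root of unity; its primitive minimal polynomial $f$ is irreducible, not cyclotomic and $\neq\pm x$, so $h(\alpha)=m(f)/d\ge c/d$, which is Conjecture~\ref{Conjecture 1.1.}. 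To reconcile this with Lehmer's original formulation for arbitrary $f\in\mathbb{Z}[x]$, I would invoke the additivity $m(fg)=m(f)+m(g)$: writing a general $f$ as its content times a product of primitive irreducible factors, any $f$ with $m(f)>0$ either has content $\ge 2$ (whence $m(f)\ge\log 2$) or a non-cyclotomic irreducible factor $f_i\neq\pm x$ (whence $m(f)\ge m(f_i)\ge c$), so the irreducible case controls the general one.

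I do not expect a genuine analytic obstacle here, precisely because the displayed statement is a reformulation rather than a new inequality: both conjectures remain open and of exactly the same difficulty, and the point is that the correspondence transports the constant verbatim. The only matters requiring care are bookkeeping, namely that the degree $d$ is preserved, that the primitivity and positive-leading-coefficient normalization makes $f$ unique, and that the Kronecker and $f=\pm x$ exclusions line up so that no case is created or lost on either side.
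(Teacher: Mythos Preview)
Your proposal is correct and matches the paper's treatment: the statement is a conjecture, not a theorem, and the paper does not prove it but merely presents it alongside Conjecture~\ref{Conjecture 1.1.} as ``the following two equivalent conjectures,'' relying implicitly on the identity $h(\alpha)=m(f)/d$ and Kronecker's theorem recalled just above. Your write-up simply makes this equivalence explicit, via exactly the dictionary the paper sets up in \S\ref{sec2}, so there is nothing to add or correct.
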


Within the framework of Conjectures \ref{Conjecture 1.1.} and \ref{Conjecture 1.2.}, the best known result to date is Dobrowolski's lower bound (see \cite{Dobrowolski}): There exists a constant $c$ such that for every \(\alpha \in \overline{\mathbb{Q}}^*\), an algebraic number of degree \(d\), and is not a root of unity, we have 

\begin{equation}\label{Dobrowolski}
h(\alpha) \geq \frac{c}{d} \left( \frac{\log(\log(3d))}{\log(3d)} \right)^3. 
\end{equation}
 This inequality can obviously be reformulated in polynomial language:

\[ m(f) \geq c \left( \frac{\log(\log(3d))}{\log(3d)} \right)^3 \]
for any irreducible polynomial \(f \in \mathbb{Z}[x]\), \(f \neq \pm x\), of degree \(d\), which is not a cyclotomic polynomial.\\

Analogues of Conjectures \ref{Conjecture 1.1.} and \ref{Conjecture 1.2.} have been considered in the context of elliptic curves (cf. \cite{David-elliptic, Hindry-Silverman-elliptic}). \\

Let us note that generalizations of Lehmer's problem to higher-dimensional varieties have been explored by Amoroso and David (cf.~\cite{AD1999, AD2000, AD2003}). For instance, they proposed the following conjectures.

\begin{conjecture}\label{mahler2.3}
For every positive  integer \(n \), there exists a constant \(c(n) > 0\) such that for any irreducible polynomial \(f \in \mathbb{Z}[x_1, \ldots, x_n]\), \(f \neq \pm x_j\), which is not a generalized cyclotomic polynomial, we have
\[ m(f) \geq c(n). \]
\end{conjecture}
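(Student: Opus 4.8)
The plan is to pass from the analytic quantity $m(f)$ to geometry and then to exploit the bridge between multivariate and univariate Mahler measures. Writing $V=\{f=0\}\subset\mathbb{G}_m^n$, the identity $\widehat{h}(V)=m(f)$ recalled above identifies the measure with the normalized height of the hypersurface, and the Boyd--Lawton--Smyth theorem says that $m(f)=0$ exactly when $V$ is a torsion subvariety. Thus the hypothesis that $f$ is not a generalized cyclotomic polynomial is precisely the hypothesis that $m(f)>0$; what must be produced is a lower bound that is \emph{uniform}, i.e.\ depends on $n$ alone.

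The cleanest route is conditional. Lawton's theorem furnishes, for any nonzero $f$, one-variable specializations $f(x^{s_1},\ldots,x^{s_n})$ whose Mahler measures converge to $m(f)$ as the exponent vector $(s_1,\ldots,s_n)$ runs through a suitable sequence. Since $m(f)>0$, the specialized integer polynomials eventually satisfy $m(f(x^{s_1},\ldots,x^{s_n}))>0$. Granting Lehmer's conjecture (Conjecture~\ref{Conjecture 1.2.}) together with the additivity $m(gh)=m(g)+m(h)$, every integer polynomial of positive Mahler measure has measure at least the Lehmer constant $c$; applying this to the specializations and passing to the limit yields $m(f)\geq c$. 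Remarkably this gives the statement with a constant $c(n)=c$ independent even of $n$, so the entire difficulty is concentrated in the unconditional version.

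Unconditionally I would follow the transcendence-theoretic scheme of Amoroso and David, which lifts Dobrowolski's argument to $\mathbb{G}_m^n$: construct an auxiliary polynomial vanishing to high multiplicity along $V$ and along its images under the multiplication maps $[p]$, use the Frobenius congruences at many primes simultaneously to manufacture extra vanishing, and then extrapolate via a multiplicity (zero) estimate, which is available precisely because $V$ is not torsion and hence not stabilized by the relevant isogenies. The main obstacle is that this procedure, carried out honestly, trades vanishing order against the degree of the auxiliary polynomial and therefore delivers only a Dobrowolski-type bound $m(f)\geq c(n)(\deg f)^{-1}(\log\log/\log)^{\kappa}$ rather than an absolute constant. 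Removing this degree dependence is exactly the crux of Lehmer's problem already in one variable, and no technique presently achieves it; I would therefore expect the realistic deliverable to be either the conditional statement above or an unconditional partial result, with the full Conjecture~\ref{mahler2.3} remaining genuinely open.
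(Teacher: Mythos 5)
This statement is Conjecture~\ref{mahler2.3}, which the paper does not prove: it is quoted from Amoroso and David as an open conjecture, and the paper's only use of it is as the target to which Conjecture~\ref{c2} reduces when the hypersurface is defined over $\Q$ (via $\widehat h(V)=m(f)$). So there is no proof in the paper to compare against, and your closing assessment---that the full statement is genuinely open and that the realistic deliverable is a conditional result or a Dobrowolski-type bound---is exactly the correct reading of the situation.

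Your conditional argument is sound and is the standard Boyd--Lawton reduction of the several-variable Lehmer problem to the one-variable case. Two small points to tighten. First, Conjecture~\ref{Conjecture 1.2.} only bounds \emph{irreducible} non-cyclotomic polynomials, whereas a specialization $f(x^{s_1},\ldots,x^{s_n})$ with positive Mahler measure might owe its positivity to a constant (content) factor $a$; by multiplicativity and \eqref{mahler1} every factor contributes nonnegatively, and $|a|\ge 2$ forces a contribution of at least $\log 2$, so the correct conclusion is $m(f)\ge\min(c,\log 2)$ rather than $c$ --- still independent of $n$, which is the point you rightly emphasize. Second, one should note that the specializations are eventually nonzero and that Lawton's theorem applies along exponent vectors whose minimal integer relation tends to infinity; this is standard but worth stating, since the limit statement is what lets you transfer the uniform lower bound. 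Your unconditional sketch correctly matches the Amoroso--David result recorded in \S\ref{sec2} (the bound $h(\boldsymbol{\alpha})\ge c(n)\,\delta(\boldsymbol{\alpha})^{-1}\log(3\delta(\boldsymbol{\alpha}))^{-\kappa(n)}$) and correctly locates the obstruction: the degree dependence produced by the auxiliary-polynomial/extrapolation trade-off is precisely the one-variable Lehmer difficulty, so no current technique removes it. In short, there is no gap in what you claim, because you do not claim a proof of the conjecture; the conditional reduction you give is correct modulo the constant-factor caveat above.
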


Let $\boldsymbol{\alpha}=(\alpha_1,\ldots,\alpha_n)\in \mathbb{G}_m(\overline \Q)^n$.  Let \( F_0 \in \mathbb{Q}[x_1, \ldots, x_n] \) be a non-zero polynomial that vanishes at \( \boldsymbol{\alpha} \) and has minimal degree. Amoroso and David \cite{AD2000} defined  the \emph{obstruction index} of the point \( \boldsymbol{\alpha} \) as  the quantity \( \deg F_0 \), which they denote by \( \delta(\boldsymbol{\alpha}) \).  Amoroso and David noted that
\[1 \leq \delta(\boldsymbol{\alpha}) \leq nD^{1/n},
\]
where \( D = [\mathbb{Q}(\alpha_1, \ldots, \alpha_n) : \mathbb{Q}] \).\\

\begin{conjecture}\label{2.4} Let \( n \) be a positive  integer. There then exists a real number \( c(n) > 0 \) such that for all \( \boldsymbol{\alpha} = (\alpha_1, \ldots, \alpha_n) \in \mathbb{G}_m^n(\overline{\mathbb{Q}}) \), such that \( \alpha_1, \ldots, \alpha_n \) are multiplicatively independent, we have
\[ h(\boldsymbol{\alpha}) \geq \frac{c(n)}{\delta(\boldsymbol{\alpha})}. \]

\end{conjecture}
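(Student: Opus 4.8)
The plan is to adapt the transcendence-theoretic method of Amoroso and David, which specializes to the classical one-variable argument when $n=1$: there multiplicative independence of the single element $\alpha_1$ means exactly that it is not a root of unity, and $\delta(\boldsymbol{\alpha})$ coincides with $[\Q(\alpha_1):\Q]$, so the statement is literally Lehmer's Conjecture \ref{Conjecture 1.1.}. I would argue by contradiction: fix a number field $K$ containing all the $\alpha_i$, set $D = [\Q(\alpha_1,\ldots,\alpha_n):\Q]$ and $\delta = \delta(\boldsymbol{\alpha})$, and suppose $h(\boldsymbol{\alpha}) < c(n)/\delta$ for a constant $c(n)$ to be determined. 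Note that since $1 \leq \delta \leq n D^{1/n}$, the bound $c(n)/\delta$ is genuinely stronger than a naive estimate in terms of $D$, so $\delta$ is the arithmetically correct quantity to track. The ultimate goal is to manufacture a nonzero algebraic integer whose normalized absolute value is less than $1$ at every place, contradicting the product formula through a Liouville-type inequality.

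The first constructive step is an auxiliary polynomial. Using Siegel's lemma (or the Bombieri--Vaaler refinement), I would produce a nonzero $F \in \Z[x_1,\ldots,x_n]$ of partial degrees bounded in terms of $\delta$ and of logarithmic height controlled by $h(\boldsymbol{\alpha})$, chosen so that $F$ vanishes to high multiplicity $T$ at $\boldsymbol{\alpha}$ together with all its Galois conjugates. The number of coefficients of $F$ is arranged to exceed the number of linear vanishing conditions imposed, and this is exactly where the assumed smallness of $h(\boldsymbol{\alpha})$ is exploited: it keeps the height of the interpolation data small, so that Siegel's lemma yields an $F$ with genuinely small integer coefficients.

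The arithmetic heart of the argument is the Frobenius congruence. For each prime $p$ ranging over a suitable dyadic interval one has the polynomial identity $F(x_1^p,\ldots,x_n^p) \equiv F(x_1,\ldots,x_n)^p \pmod{p}$. Combining this with the high-order vanishing of $F$ at $\boldsymbol{\alpha}$ forces $F$ and its derivatives to take very small values at the sparse orbit points $\boldsymbol{\alpha}^{(p)} = (\alpha_1^p,\ldots,\alpha_n^p)$. Estimating $|F(\boldsymbol{\alpha}^{(p)})|_v$ at the archimedean places (via the smallness of the coefficients and the height) and at the $p$-adic places (via the congruence), and then forming the product over all $v \in M_K$, yields a nonzero algebraic number all of whose normalized absolute values lie below $1$, contradicting the product formula, provided the orbit points $\boldsymbol{\alpha}^{(p)}$ are sufficiently numerous and pairwise distinct.

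The main obstacle, and the reason this remains a conjecture, is twofold and interlocking. First, one needs a \emph{zero estimate}: a multiplicity estimate on $\mathbb{G}_m^n$ in the spirit of Philippon and of Amoroso--David, guaranteeing that $F$ cannot vanish identically along a torsion subvariety through $\boldsymbol{\alpha}$. This is precisely the point at which multiplicative independence and the obstruction index $\delta$ must be converted into genuine geometric non-degeneracy; distinctness of the orbit points and the admissible choice of $T$ hinge on it, and its sharp form is delicate. Second, and more seriously, optimizing the parameters $T$, the degrees of $F$, and the range of primes produces only a Dobrowolski-type bound of the shape $h(\boldsymbol{\alpha}) \geq \frac{c(n)}{\delta}\left(\frac{\log\log 3\delta}{\log 3\delta}\right)^{\kappa(n)}$, exactly as in \eqref{Dobrowolski}. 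Removing the spurious $\left(\log\log/\log\right)$ factor to reach the clean bound $c(n)/\delta$ is open already for $n=1$, so the statement as worded lies beyond the reach of present techniques.
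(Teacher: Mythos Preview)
The statement you were asked about is Conjecture~\ref{2.4}, and the paper does \emph{not} prove it: it is recorded as an open conjecture of Amoroso and David, and immediately afterward the paper quotes only the Dobrowolski-type partial result
\[
h(\boldsymbol{\alpha}) \geq \frac{c(n)}{\delta(\boldsymbol{\alpha})} \bigl(\log(3\delta(\boldsymbol{\alpha}))\bigr)^{-\kappa(n)}
\]
from \cite{AD1999}. So there is no ``paper's own proof'' to compare against.

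Your proposal is not a proof either, and you say so yourself in the final paragraph: the transcendence machinery (auxiliary function via Siegel's lemma, Frobenius congruences, zero/multiplicity estimates on $\mathbb{G}_m^n$, Liouville inequality) is exactly the Amoroso--David adaptation of Dobrowolski's method, and it yields only the bound above with the parasitic logarithmic factor, not the clean $c(n)/\delta(\boldsymbol{\alpha})$. Your diagnosis that removing this factor is already open for $n=1$ (where it is Lehmer's conjecture itself) is correct and matches the paper's presentation. In short, your write-up is an accurate sketch of the best known \emph{approach} and a correct explanation of why the conjecture remains open, but it should be labeled as such rather than as a proof proposal.
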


Amoroso and David \cite{AD1999} adapted the method of Dobrowolski in order to address Conjecture \ref{2.4}. Their work yielded the following result:
\[
h(\boldsymbol{\alpha}) \geqslant \frac{c(n)}{\delta(\boldsymbol{\alpha})} \log(3\delta(\boldsymbol{\alpha}))^{-\kappa(n)},
\]
where  $\kappa(n) = (n + 1)(n + 1)!^n - n$ (cf. \cite[Th\'eor\`eme 1.5]{AD1999}).

\section{Lehmer's problem for a Drinfel'd module}\label{sec3}

Let $p$ be a prime number and $n$ a positive integer. We put $q:=p^n$, and we denote by $\mathbb{F}_q$ the finite field of cardinal $q$. 
We denote by \( \mathbb{F}_q[T] \) the ring of polynomials in one variable with coefficients in the finite field \( \mathbb{F}_q \), by \( K = \mathbb{F}_q(T) \) its field of fractions, and by \( K_\infty = \mathbb{F}_q((1/T)) \) the completion of \( K \) with respect to the \( (1/T) \)-adic valuation \( v \). This valuation \( v \) is extended to an algebraic closure \( \overline{K} \) (respectively, \( \overline{K}_\infty \)) of \( K \) (respectively, \( K_\infty \)). We also denote by \( \iota \) the canonical homomorphism from \( \mathbb{F}_q[T] \) to \( \overline{K} \).\\

We have the usual Weil height on \(\mathbb{P}^N(\overline{K})\). If \(L\) is a finite extension of \(K\) of degree \(m\), the height of a point \(P = [x_0: \cdots:x_N]\) of \(\mathbb{P}^N(L)\) is given by:

\[ h(P) = \sum_m \frac{d(w)}{m} \max_{0\leq i\leq N} \left(-w(x_i)\right),  \]
where the sum is extended over all places of \(L\), \(d(w)\) denotes the residual degree at the place \(w\), normalized by \(w(L) = \mathbb{Z} \cup \{+\infty\}\). This height is independent of the field \(L\) chosen as shown by the usual properties of valuations (cf.~\cite{Lang-FDG}).

We define the height  $h(\alpha) $ of an element \(\alpha\) of \(\overline K\) as  the height $h([1:\alpha])$ of  the point \([1:\alpha]\). That is, we embed \(\overline K\) into \(\mathbb{P}^1\) and consider the Weil height relative to this embedding. \\

Let $\overline K\{\tau\}$ be the ring of twisted polynomials over $\overline K$. For every $f := \sum_{i=0}^{n} a_i \tau^i \in \overline K\{\tau\}$, we define its derivation $D$ at $0$ by $D(f) := a_0$. An homomorphism of $\mathbb{F}_q$-algebras $\phi : \mathbb{F}_q[T] \to \overline{K}\{\tau\}$ is called a \emph{Drinfel'd \ $\mathbb{F}_q[T]$-module} defined over $\overline{K}$ if $D \circ \phi = \iota$ and there exists $P \in \mathbb{F}_q[T]$ such that $\phi(P) \neq \iota(P) \tau^0$. According to \cite[Chap.~4, Lemma~4.5.1]{Goss}, there exists an integer $r > 0$ such that $\deg(\phi(P)) = r \deg(P)$, for all $P \in \mathbb{F}_q[T]$. The integer $r$ is called the \textit{rank} of $\phi$.\\

Denis \cite{Denis} defined the global height of the Drinfel'd module $\phi$ at $\alpha \in \overline{K}$ as follows:
\[
\hat{h}_\phi(\alpha) = \lim_{n \to \infty} \frac{h(\phi(P^n)(\alpha))}{q^{nr}},
\]
where $P$ is a polynomial in $\mathbb F_q[T]$ of positive degree $r$. This limit does not depend on the choice of $P$ (cf.~\cite[Th\'eor\`eme 1]{Denis}). Furthermore, Denis \cite[Th\'eor\`eme 2]{Denis} established an analogue of Dobrowolski's result \cite{Dobrowolski} when $\phi$ is a Carlitz module.

David and Pacheco \cite{David-Pacheco} solved the so-called abelian version of the Lehmer's problem  in the context of Drinfel'd modules. It is worth noting that the classical version of this problem, concerning non-torsion points in $\mathbb{G}_m(\mathbb{Q}^{\text{ab}})$, was solved by Amoroso and Dvornicich \cite{Amoroso-Dvornicich}. {}{The analogue of this result for abelian varieties was given by Baker and
Silverman, see \cite{Baker2003,BS2004,Silverman-lower}}.

\section{Adelic curves}\label{AC}

In this section, we briefly recall the theory of adelic curves. Let $K$ be a commutative ring and $M_K$ be the set of all absolute values on $K$. 
Following \cite{Huayi-Atsushi-Adelic}, an adelic structure on $K$ consists of a measure space $(\Omega,\mathcal A,\nu) $ equipped with a map $\phi:\omega\mapsto |\cdot|_\omega$ from 
$\Omega$ to $M_K$ satisfying the following properties:
\begin{enumerate}
\item $\mathcal A$ is a $\sigma$-algebra on $\Omega$ and $\nu$ is a measure on $(\Omega,\mathcal A)$;

\item for any $a\in K^\ast:=K\setminus \{0\} $, the function 
$\omega\mapsto \log|a|_\omega$ is $\mathcal A$-measurable, integrable with respect to
the measure $\nu$.

\end{enumerate}
This data is denoted by $S=(K,(\Omega,\mathcal A, \nu),\varphi)$.  \\

$S$ is said to be proper if the equality
\[
\int_\Omega \log|a|_\omega d\nu(\omega)=0,
\]
holds for each $a\in K^\ast$.\\

Number fields and function fields are fundamental examples of proper adelic curves 
(cf.~\cite[p. 170]{Huayi-Atsushi-Adelic}). Note that the properness of these curves corresponds to the product formula.\\

From now on, $S=(K,(\Omega,\mathcal A,\nu),\phi)$ is a proper adelic curve and $K^{ac}$ an algebraic 
closure of $K$. Let $S\otimes_K K^{ac}=({K^{\mathrm{ac}}},(\Omega_{K^{\mathrm{ac}}},\mathcal{A}_{K^{\mathrm{ac}}},\nu_{K^{\mathrm{ac}}}),\phi_{K^{\mathrm{ac}}})$ be the algebraic extension of $S$ by ${K^{\mathrm{ac}}}$ (see \cite[Paragraph 3.4.2]{Huayi-Atsushi-Adelic} for the construction).\\

\subsection{Height of points in $\p^n(K^{\mathrm{ac}} )$}\label{2.1}

For $[a_0:\ldots:a_n]\in \p^n({K^{\mathrm{ac}}})$,
the height of $[a_0:\ldots:a_n]$ with respect to the adelic curve $S$ is given as follows.
\[
h_S([a_0:\ldots:a_n])=\int_{\Omega^{ac}}\log \max (|a_0|_\chi,\ldots,|a_n|_\chi) \nu_{K^{\mathrm{ac}}}(d\chi),
\]
see \cite[Paragraph 3.5]{Huayi-Atsushi-Adelic} for more details about the construction.\\

For every $\alpha\in  K^{\textrm{ac}}$, we set
\begin{equation}\label{hS}
h_S(\alpha):=h_S([1:\alpha]).
\end{equation}

\begin{lemma} The following set
    \[
    \{ \alpha \in (K^{\mathrm{ac}})^\ast \mid h_S(\alpha)=0\},
    \]
    is a subgroup of $(K^{\mathrm{ac}})^\ast$.
    
\end{lemma}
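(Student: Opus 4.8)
The plan is to show that the zero set $Z := \{\alpha \in (K^{\mathrm{ac}})^\ast \mid h_S(\alpha) = 0\}$ is closed under multiplication and inversion, and contains the identity $1$. The essential point is that $h_S$ is defined via an integral of $\log\max(\cdot,\cdot)$ over the measure space $(\Omega_{K^{\mathrm{ac}}}, \mathcal{A}_{K^{\mathrm{ac}}}, \nu_{K^{\mathrm{ac}}})$, so the proof reduces to two facts: first, a pointwise (local) inequality relating $\log\max(1,|\alpha\beta|_\chi)$ to $\log\max(1,|\alpha|_\chi)$ and $\log\max(1,|\beta|_\chi)$ at each place $\chi$; and second, the properness of $S$ (hence of its algebraic extension $S \otimes_K K^{\mathrm{ac}}$), which lets me convert the integral of $\log|\alpha|_\chi$ into $0$.

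First I would record that $h_S(\alpha) \geq 0$ for all $\alpha \in (K^{\mathrm{ac}})^\ast$, since $\log\max(1,|\alpha|_\chi) \geq 0$ pointwise and the measure is nonnegative. Next, for the inversion step, I would use the properness identity $\int_{\Omega^{ac}} \log|\alpha|_\chi \, \nu_{K^{\mathrm{ac}}}(d\chi) = 0$. Writing $\log\max(1,|\alpha^{-1}|_\chi) = \log\max(1,|\alpha|_\chi) - \log|\alpha|_\chi$ (which holds pointwise at each place, distinguishing $|\alpha|_\chi \leq 1$ from $|\alpha|_\chi > 1$), I would integrate to obtain $h_S(\alpha^{-1}) = h_S(\alpha) - 0 = h_S(\alpha)$. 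Thus $h_S(\alpha) = 0$ forces $h_S(\alpha^{-1}) = 0$, so $Z$ is closed under inversion. Here I am implicitly using that properness passes to the algebraic extension $S \otimes_K K^{\mathrm{ac}}$, a fact available from the construction in \cite{Huayi-Atsushi-Adelic}.

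For closure under multiplication, the key local estimate is the elementary inequality $\max(1,|\alpha\beta|_\chi) \leq \max(1,|\alpha|_\chi)\max(1,|\beta|_\chi)$, valid at every place $\chi$ (for both archimedean and non-archimedean absolute values, since $|\alpha\beta|_\chi = |\alpha|_\chi |\beta|_\chi$). Taking logarithms and integrating gives the subadditivity $h_S(\alpha\beta) \leq h_S(\alpha) + h_S(\beta)$. Combined with nonnegativity, if $\alpha,\beta \in Z$ then $0 \leq h_S(\alpha\beta) \leq 0$, so $h_S(\alpha\beta) = 0$ and $\alpha\beta \in Z$. Finally $h_S(1) = \int_{\Omega^{ac}} \log\max(1,1)\,\nu_{K^{\mathrm{ac}}}(d\chi) = 0$, so $1 \in Z$. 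Together with closure under multiplication and inversion, this shows $Z$ is a subgroup of $(K^{\mathrm{ac}})^\ast$.

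I expect the main obstacle to be purely bookkeeping rather than conceptual: one must verify that the measurability and integrability hypotheses in the definition of an adelic curve are preserved on the algebraic extension $S \otimes_K K^{\mathrm{ac}}$, so that all the integrals above are well-defined and finite, and that the properness equality genuinely holds for elements of $(K^{\mathrm{ac}})^\ast$ and not merely for $K^\ast$. These are exactly the facts established in the construction of $S \otimes_K K^{\mathrm{ac}}$ in \cite[Paragraph 3.4.2]{Huayi-Atsushi-Adelic}, so once those are invoked, the argument is a two-line application of nonnegativity, subadditivity, and properness.
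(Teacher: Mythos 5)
Your proposal is correct and follows essentially the same route as the paper's own (very terse) proof: the paper likewise derives subadditivity of $h_S$ from the inequality $\max(1,xy)\leq\max(1,x)\max(1,y)$ and handles inversion via the properness (product formula) of $S$. You have simply spelled out the nonnegativity, the identity $h_S(\alpha^{-1})=h_S(\alpha)$, and the preservation of properness under the extension $S\otimes_K K^{\mathrm{ac}}$, all of which the paper leaves implicit.
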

\begin{proof} This follows from the following inequality
\[
 \max(1, xy)\leq \max(1,x)\max(1,y),\quad\forall x,y\geq 0.
 \]
 and the product formula (i.e. the properness of $S$).

\end{proof}

\subsection{The adelic structure on $\Q(T_1,\ldots,T_d)$}\label{QT}

Let $K=\Q(T_1,\ldots,T_d)$ be the field of rational functions of $d$ variables $T_1,\ldots,T_d$ and with coefficients in $\Q$. 
$\Q(T_1,\ldots,T_d)$ can be endowed with an interesting adelic structure, see \cite[Paragraph 3.2.6]{Huayi-Atsushi-Adelic}, or \cite[Section 3]{Moriwaki1}
and \cite[Example 2.5]{Burgos2016} for a more explicit construction. Let us review in detail
the case when $d=1$. 

Let $\Omega=\Omega_h\coprod \mathcal{P}\coprod [0,1]_\ast$, where $\Omega_h$ is the set of all closed points of $\p_\Q^1\setminus\{\infty\}$, and $\mathcal P$ denotes the set all prime numbers, and
$[0,1]_\ast$  the set of $t\in [0,1]$ such that $e^{2\pi it}$ is transcendental.

 Let $\phi:\Omega\rightarrow M_K$ be the map sending $\omega\in \Omega$ to $|\cdot|_\omega$, defined as follows. Let $x\in  \Omega_h$. {}{Then,} $x$ defines a discrete valuation on $K$ which is denoted by $\mathrm{ord}_x(\cdot)$. Let  $F_x$ be  an irreducible polynomial in $\Z[T]$ with $F_x(x)=0$. Let $H(x)$ be the Mahler measure of the polynomial $F_x$. It is defined as follows.

\[
H(x):=\exp\left(\int_0^1\log |F_x(e^{2\pi i t})|dt \right).
\]

Let $|\cdot|_x$ be the absolute value on $\Q(T) $ such that
\[
|\varphi|_x=H(x)^{- \mathrm{ord}_x(\varphi)},\quad \forall \varphi\in \Q(T).
\]
For $p$ a prime number, let $|\cdot|_p$ be $p$-adic value on $\Q$. It extends to $\Q[T]$ as follows.
\[
|\varphi|_p:=\max_{j=0,\ldots,d}|a_j|_p,\quad \forall \varphi=\sum_{i=0}^d a_i T^i\in \Q[T],
\]
and then, by multiplicativity, to $\Q(T)$.\\
 
For any $t\in [0,1]_\ast$, let $|\cdot|_t$ be the absolute value on $\Q(T)$ such that
\[
|\varphi|_t:=|\varphi(e^{2\pi i t})|,\quad \forall \varphi\in \Q(T),
\]
where $|\cdot|$ denotes the usual absolute value of $\C$. \\

$\Omega_h$ and $\mathcal P$ are equipped with the discrete $\sigma$-algebras, and $[0,1]_\ast$ 
with the restriction of the Borel $\sigma$-algebra on $[0,1]$. Let $\mathcal A$ be the $\sigma$-algebra on $\Omega$ generated by the above $\sigma$-algebras on $\Omega_h,\mathcal P$ and
$[0,1]_\ast$ respectively. Let $\nu$ be the measure on $\Omega$ such that
$\nu(\{x\})=1$ for $x\in \Omega_h$, that $\nu(\{p\})=1$ for any prime number $p$ and that the restriction of $\nu$ on $[0,1]_\ast$ coincides with the Lebesgue measure.

This data defines an adelic structure on $\Q(T)$. Moreover, $(\Q(T), (\Omega,\mathcal{A},\nu),\phi)$ 
is proper (cf.~\cite[Paragraph 3.2.5]{Huayi-Atsushi-Adelic}).

\subsection{On Lehmer's problem on adelic curves}

Motivated by the previous work on Lehmer's problem, it is natural to consider the following question:

\begin{question} Let $S=(K,(\Omega,\mathcal A,\nu),\phi)$ be a proper adelic curve and $K^{ac}$ an algebraic 
closure of $K$.  Is there a constant $c:=c_S>0$  such for every $\alpha\in K^{\mathrm{ac}}$ with $h_S(\alpha)>0$,  we have
\[
h_S(\alpha)\geq \frac{c}{[K(\alpha):K]}?
\]

\end{question}

When $K$ is $\Q$ and $S$ is the classical adelic curve associated with $\Q$, then this is the classical Lehmer's problem.

\section{Lehmer's problem on $\overline{\Q(T)}$}\label{main}
In this section, we  show that the classical Lehmer's problem generalizes naturally to the adelic curve $\Q(T)$.\\

Let $S$ be the adelic curve on $\Q(T)$ recalled in Paragraph \ref{QT}. Let 
\[
[\varphi:\psi]\in \p^1\left(\overline{\Q(T)} \right).
\]

We may assume that $\varphi$ and $\psi$ are integral over $\Z[T]$. Then, there exist 
$d,e\in \N$ and $f_1(T),\ldots f_d(T)$ and $g_1(T),\ldots,g_e(T)$  in $\Z[T]$ such that
\[
\varphi^d+f_1(T)\varphi^{d-1}+\ldots+f_{d-1}(T)\varphi+f_d(T)=0,
\]
and
\[
\psi^{e}+g_1(T)\varphi^{e-1}+\ldots+g_{e-1}(T)\psi+g_e(T)=0.
\]

Let 
\[
c_{\varphi,\psi}:=\gcd(c(f_d)^{e}, c(g_e)^{d}).
\]
where $c(f_d)$ (resp. $c(g_e)$) is the common divisor of the coefficients of $f_d$ (resp. $g_e$). \\

If $\{x\in \overline \Q\mid g_e(x)=f_d(x)=0\} $ is not empty, we let 
\[
F(T):=\prod_{\substack{x\in \overline \Q \\
g_e(x)=f_d(x)=0}} F_x(T)^{\min(\mathrm{ord}_x(f_d^e),\mathrm{ord}_x(g_e^d))}\,,
\]
with $F_x\in \Z[T]$ with coprime coefficients, irreducible in $\Q[T]$ and $F_x(x)=0$ for $x\in \overline \Q$. Otherwise, we let
\[
F(T)=1.
\]

By the product formula, we have 
\[
h_S([\varphi:\psi])=h_S\Bigl(\Bigl[\tfrac{\varphi}{(c_{\varphi,\psi} F)^{\frac{1}{de}}}:\tfrac{\psi}{(c_{\varphi,\psi} F)^{\frac{1}{de}}}\Bigr]\Bigr).
\]

Let $K'$ be a finite extension of $\Q(T)$ such that $\varphi$ and $\psi$ are in $K'$.\\

Let $\omega\in \Omega_{K'}$ with $\omega\mid x$ where $x\in \Omega_h$. If $x$ is a common zero of $f_e$ and $g_d$, then
\[
\begin{split}
\log \left|\frac{\varphi}{(c_{\varphi,\psi} F)^{\frac{1}{de}}}\right|_\omega=&\frac{1}{d}\log |f_d|_x-\frac{1}{de}\log |F|_x\\
=& -\frac{\mathrm{ord}_x(f_d(T)) }{d}\log H(x)+\frac{ \min(\mathrm{ord}_x(f_d(T)^e), \mathrm{ord}_x(g_e(T)^d))}{de}\log H(x).
\end{split}
\]
and 
\[
\begin{split}
\log \left|\frac{\psi}{(c_{\varphi,\psi} F)^{\frac{1}{d}}}\right|_\omega=&\frac{1}{e}\log |g_e|_x-\frac{1}{de}\log |F|_x\\
=& -\frac{\mathrm{ord}_x(g_e(T))}{e}\log H(x)+\frac{ \min(\mathrm{ord}_x(f_d(T)^e), \mathrm{ord}_x(g_e(T)^d) )}{de}\log H(x).
\end{split}
\]

We conclude that
\[
\max\left(\left|\frac{\varphi}{(c_{\varphi,\psi} F)^{\frac{1}{de}}}\right|_\omega,\left|\frac{\psi}{(c_{\varphi,\psi} F)^\frac{1}{de}} \right|_\omega\right)=1
\]
This equation holds if $x$ is not a common zero of $f_e$ and $g_d$.\\

Let $\omega \in \Omega$ with $\omega\mid p$ and $p\in \mathcal{P}$. We have
\[
\left|\frac{\varphi}{  (c_{\varphi,\psi} F) ^{\frac{1}{de}}}\right|_\omega=\frac{|f_d(T)|_p^{\frac{1}{d}}}{| (c_{\varphi,\psi} ) ^{\frac{1}{de}} |_p^{}}=\left(\frac{|c(f_d)^{e }|_p^{}} {|c_{\varphi,\psi} |_p}  \right)^{\frac{1}{de}},
\]
and
\[
\left|\frac{\psi}{(c_{\varphi,\psi} F) ^{\frac{1}{de}}}\right|_\omega=\frac{|g_e(T)|_p^{\frac{1}{e}}}{|(c_{\varphi,\psi} ) ^{\frac{1}{de}}|_p^{ }}=\left(\frac{|c(g_e)^{d }|_p^{}} {|c_{\varphi,\psi} |_p}  \right)^{\frac{1}{de}}.
\]

We conclude that
\[
\max\left(\left|\frac{\varphi}{(c_{\varphi,\psi} F)^{\frac{1}{de}}}\right|_\omega,\left|\frac{\psi}{(c_{\varphi,\psi} F)^\frac{1}{de}} \right|_\omega\right)=1.
\]

Case of $w\in \Omega$ with $w\mid t$ and $t\in [0,1]_\ast$.  
Note that $K'_\omega=\Q(T)_t=\C$. We have then
\[
\begin{split}
\int_{\substack{w\in \Omega\\
w\mid t,\ t\in [0,1]_\ast }}  \log \max\Bigl( &\Bigl  |\frac{\varphi}{(c_{\varphi,\psi} F)^{\frac{1}{de}}}\Bigr|_\omega, 
\Bigl |\frac{\psi}{(c_{\varphi,\psi} F)^\frac{1}{de}}  
\Bigr|_\omega\Bigr) d\nu(\omega)\\=
&\int_{t\in [0,1]_\ast} \frac{1}{[K':\Q(T)]} \left( \sum_{\substack{ w\in K'\\ w\mid t}}  \log \max\Bigl( \Bigl  |\frac{\varphi}{(c_{\varphi,\psi} F)^{\frac{1}{de}}}\Bigr|_\omega, 
\Bigl |\frac{\psi}{(c_{\varphi,\psi} F)^\frac{1}{de}}  
\Bigr|_\omega\Bigr) \right)dt.
\end{split}
\]
Gathering the  above computations, we obtain that 
\[
\frac{1}{[K':\Q(T)]}\max\Bigl(\int_{t\in [0,1]_\ast}  \sum_{\substack{ w\in K'\\ w\mid t}}  \log  \Bigl  |\frac{\varphi}{(c_{\varphi,\psi} F)^{\frac{1}{de}}}\Bigr|_\omega dt,  
\int_{t\in [0,1]_\ast}  \sum_{\substack{ w\in K'\\ w\mid t}}  \log  \Bigl  |\frac{\psi}{(c_{\varphi,\psi} F)^{\frac{1}{de}}}\Bigr|_\omega dt\Bigr) \leq h_S([\varphi:\psi]).
\]

Using \cite[Lemma 3.5.4]{Huayi-Atsushi-Adelic}, we obtain 
{\small{
\[
\begin{split}
\tfrac{1}{[K':\Q(T)]}\max\Bigl(\int_{t\in [0,1]_\ast}  \sum_{\substack{ w\in K'\\ w\mid t}}  \log  \Bigl  |\mathrm{N}_{K'/\Q(T)}\left(\tfrac{\varphi}{(c_{\varphi,\psi} F)^{\frac{1}{de}}}\right) \Bigr|_\omega dt ,  
\int_{t\in [0,1]_\ast}  \sum_{\substack{ w\in K'\\ w\mid t}} & \log  \Bigl  |\mathrm{N}_{K'/\Q(T)}\left(\tfrac{\psi}{(c_{\varphi,\psi} F)^{\frac{1}{de}}}\right) \Bigr|_\omega dt \Bigr)\\  
&\leq [K':\Q(T)] h_S([\varphi:\psi]).
\end{split}
\]
}}
Hence
\begin{equation}\label{key2}
\begin{split}
\max\Bigl(\int_{t\in [0,1]}   \log  \Bigl  |\mathrm{N}_{K'/\Q(T)}\left(\tfrac{\varphi}{(c_{\varphi,\psi} F)^{\frac{1}{de}}}\right) \Bigr|_t dt,  
\int_{t\in [0,1]}   & \log  \Bigl  |\mathrm{N}_{K'/\Q(T)}\left(\tfrac{\psi}{(c_{\varphi,\psi} F)^{\frac{1}{de}}}\right) \Bigr|_t dt\Bigr)\\  
&\leq [K':\Q(T)] h_S([\varphi:\psi]).
\end{split}
\end{equation}

\begin{theorem}[A Kronecker's theorem]\label{Kronecker}

\[
 \{ \alpha \in (\overline{\Q(T)})^\ast \mid h_S(\alpha)=0\}=\{\text{roots of unity in} \ \overline{\Q}\}.
\]

\end{theorem}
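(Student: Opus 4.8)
The plan is to prove the two inclusions separately, using the inequality \eqref{key2} produced by the computation above, the fact (from the lemma in \S\ref{2.1}) that the height-zero locus is a subgroup, and the theorems of Kronecker and of Boyd, Lawton and Smyth recalled in \S\ref{sec2}. The inclusion $\supseteq$ is the elementary one: if $\alpha\in\overline{\Q}$ is a root of unity, say $\alpha^n=1$, then for every $\chi\in\Omega^{\mathrm{ac}}$ we have $|\alpha|_\chi^n=|\alpha^n|_\chi=1$, hence $|\alpha|_\chi=1$ and $\log\max(1,|\alpha|_\chi)=0$. Integrating against $\nu_{K^{\mathrm{ac}}}$ gives $h_S(\alpha)=0$, so every root of unity lies in the height-zero locus.

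For the inclusion $\subseteq$, take $\alpha\in(\overline{\Q(T)})^\ast$ with $h_S(\alpha)=0$. First I would clear denominators to write $[1:\alpha]=[\varphi:\psi]$ with $\varphi,\psi$ integral over $\Z[T]$, so that the normalization carried out before the statement applies verbatim. Feeding this point into \eqref{key2} and using $h_S(\alpha)=0$ shows that the archimedean integral attached to $\psi$ is $\le 0$; since the height-zero locus is a subgroup, applying the same bound to $\alpha^{-1}$ gives the opposite inequality, so that integral vanishes. I would then recognize it as a Mahler measure: $\mathrm{N}_{K'/\Q(T)}(\alpha)$ is, up to sign and a positive power, the constant term $g_e(T)\in\Z[T]$ of the monic minimal polynomial of $\alpha$, and $\int_0^1\log|g_e(e^{2\pi i t})|\,dt=m(g_e)$. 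As $m(g_e)\ge 0$ for any nonzero $g_e\in\Z[T]$, we obtain $m(g_e)=0$; running the same argument for the whole primitive irreducible minimal polynomial $P(x,T)\in\Z[x,T]$ of $\alpha$ shows likewise that the two-variable Mahler measure $m(P)$ vanishes.

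It remains to deduce that $\alpha$ is a root of unity in $\overline{\Q}$, and this is the step I expect to be the main obstacle. By the theorem of Boyd, Lawton and Smyth, $m(P)=0$ forces the zero set of $P$ in $\mathbb{G}_m^2$ to be a torsion subvariety, equivalently $P$ is, up to sign and monomial factors, a product of cyclotomic polynomials. The delicate point is to rule out any genuine dependence on $T$ and thereby force $\alpha\in\overline{\Q}$; once constancy is established, $m(P)=0$ reduces to the classical Kronecker criterion, which gives at once that $\alpha$ is a root of unity and completes the inclusion. I would therefore concentrate the main effort on showing that $h_S(\alpha)=0$ is incompatible with $\alpha$ being non-constant, the natural attempt being to exhibit a place $x\in\Omega_h$ with $H(x)>1$ at which $\alpha$ has a zero or pole and which therefore contributes strictly positively to the height. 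The subtlety I anticipate is precisely that the zeros and poles could concentrate at the places $x$ with $H(x)=1$, where $|\cdot|_x$ is trivial, so that closing this gap — rather than the passage through \eqref{key2} — is the real heart of the argument.
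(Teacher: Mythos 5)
Your two inclusions follow the same route as the paper's own proof: the containment $\supseteq$ is the elementary one, and for $\subseteq$ the paper likewise funnels everything through \eqref{key2} and the Mahler-measure/Boyd--Lawton--Smyth circle of ideas (it first treats a coprime pair $\varphi,\psi\in\Z[T]$, for which $h_S([\varphi:\psi])=\int_0^1\log\max(|\varphi(e^{2\pi i t})|,|\psi(e^{2\pi i t})|)\,dt$, concludes that $\varphi(z)+y\,\psi(z)$ is a generalized cyclotomic polynomial, and then disposes of a general $\alpha$ in one sentence invoking \eqref{key2}). Two of your intermediate steps are looser than they should be: the bound \eqref{key2} carries the correction terms coming from $c_{\varphi,\psi}$ and $F$, which you silently drop when concluding that the archimedean integral is $\leq 0$; and promoting $m(g_e)=0$ to $m(P)=0$ for the full minimal polynomial $P(x,T)$ is not literally ``the same argument'' --- it needs Jensen's formula in the $x$-variable together with the contributions of the finite places. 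But these are repairable and no worse than the paper's own level of detail.

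The step you flag as the main obstacle is, however, a genuine gap, and it cannot be closed: the inclusion $\subseteq$ of Theorem~\ref{Kronecker} fails as stated. Take $\alpha=T$. Its only zero in $\Omega_h$ is $x=0$, where $F_x(T)=T$ has Mahler measure $0$, so $H(x)=1$ and $|\cdot|_x$ is trivial; its pole sits at $\infty$, which is excluded from $\Omega_h$ by the construction of \S\ref{QT}; $|T|_p=1$ for every prime $p$; and $|T|_t=|e^{2\pi i t}|=1$ for every $t\in[0,1]_\ast$. Hence $h_S(T)=0$ although $T$ is not a root of unity; more generally every root $\alpha$ of $x^a-\zeta T^b$ (with $\zeta$ a root of unity and $a\geq 1$) satisfies $h_S(\alpha)=0$, since the height-zero locus is a subgroup and $h_S(\alpha^a)=a\,h_S(\alpha)$. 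This is precisely the scenario you anticipated: the zeros concentrate at the places with $H(x)=1$ and the pole escapes to the missing place $\infty$, so no place contributing strictly positively can be exhibited. Your own mechanism confirms the obstruction: $m(x-T)=0$, and Boyd--Lawton--Smyth only yields that the zero set is a torsion subvariety of $\mathbb{G}_m^2$ --- which the diagonal is. The paper's proof makes the same unjustified leap at ``it is not difficult to show that $\varphi$ and $\psi$ are, in fact, roots of unity'': for the coprime pair $(\varphi,\psi)=(1,T)$, the polynomial $1+yz$ is generalized cyclotomic, yet the asserted conclusion fails. So your instinct was right, and the honest repair is to change the statement rather than the proof: the height-zero group is the division group generated by the roots of unity together with $T$ (equivalently, those $\alpha$ whose minimal polynomial cuts out a torsion hypersurface in $\mathbb{G}_m^2$), not the roots of unity alone.
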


\begin{proof}

Let $\varphi$ and $\psi$ be two coprime polynomials in $ \Z[T]$. We have
\[
h_S([\varphi:\psi])=\int_0^1\log \max(|\varphi(e^{2\pi i t})|, |\psi(e^{2\pi i t})|)dt.
\]
Let us assume that  $h_S([\varphi:\psi])=0$. This implies that
\[
\varphi(z), \psi(z),\ \text{and}\ \varphi(z)+y \psi(z)
\]
are generalized cyclotomic polynomials in the variables $y,z$ (see for instance \cite{Boyd}).  We have used the following fact:
\[
\int_0^1\log \max(|\varphi(e^{2\pi i t})|, |\psi(e^{2\pi i t})|)dt=\int_{0}^1 \int_0^1 \log|\varphi(e^{2\pi i t})+e^{2\pi  is}\psi(e^{2\pi i t}) | dtds.
\]
It is not difficult to show that
$\phi$ and $\psi$  are, in fact, roots of unity in $\overline \Q$.\\

Let $\alpha\in \overline{\Q(T)}$ with $h_S(\alpha)=0$.  Using \eqref{key2}, we can conclude that $\alpha$ is a root of unity in $\overline \Q$.

\end{proof}

 Let $\varphi\in \overline{\Q(T)}$. Let $d:=[\Q(T)(\varphi):\Q(T)]$. We can assume
\[
[1:\varphi]=[\varphi_1:\varphi_2],
\]
with $\varphi_1\in \Z[T]$ and $\varphi_2\in \overline{\Z[T]}$ (integral closure of $\Z[T]$).\\ 

From \eqref{key2} and assuming that Lehmer's problem is true, we get 
\[
h_S(\varphi)(=h_S([\varphi_1:\varphi_2]))\geq  \frac{c}{\deg(\varphi)},
\]
where $\deg \varphi=[\Q(T)(\alpha):\Q(T)]$ and $c$ is the constant in Lehmer's problem. \\

\begin{remark}\label{rmk5.2}

Let $\alpha \in \overline{\mathbb{Q}}$. It is clear that

\[
h_S(\alpha) = h(\alpha),
\]
where $h$ is the standard Weil height of $\alpha$.
\end{remark}

We conclude that
\begin{theorem}\label{Lehmer}
Lehmer's problem is equivalent to Lehmer's problem on $\Q(T)$.
\end{theorem}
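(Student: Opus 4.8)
The plan is to prove the two implications separately, since the statement is an equivalence. Throughout I will use Remark \ref{rmk5.2}, namely $h_S(\alpha)=h(\alpha)$ for $\alpha\in\overline{\Q}$, together with the elementary fact that $[\Q(T)(\alpha):\Q(T)]=[\Q(\alpha):\Q]$ whenever $\alpha$ is algebraic over $\Q$; the latter holds because the minimal polynomial of $\alpha$ over $\Q$ remains irreducible over the purely transcendental extension $\Q(T)$ by Gauss's lemma.

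First I would treat the direction ``Lehmer on $\Q(T)$ $\Rightarrow$ Lehmer on $\Q$,'' which is the easy one. Given $\beta\in\overline{\Q}^\ast$ of degree $d$ over $\Q$ that is not a root of unity, I view it inside $\overline{\Q(T)}$; by Theorem \ref{Kronecker} it is not a root of unity there either, so the hypothesis applies and yields $h_S(\beta)\geq c/[\Q(T)(\beta):\Q(T)]$. Substituting $h_S(\beta)=h(\beta)$ and $[\Q(T)(\beta):\Q(T)]=d$ gives $h(\beta)\geq c/d$, which is exactly Conjecture \ref{Conjecture 1.1.}, with the same constant $c$.

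For the converse, ``Lehmer on $\Q$ $\Rightarrow$ Lehmer on $\Q(T)$,'' the computation culminating in \eqref{key2} does the main work, and this is where the substance lies. Given $\varphi\in\overline{\Q(T)}$ which is not a root of unity, I would take $K'=\Q(T)(\varphi)$ so that $[K':\Q(T)]=\deg(\varphi)$ and write $[1:\varphi]=[\varphi_1:\varphi_2]$ as in the paragraph preceding the theorem. The two quantities $\mathrm{N}_{K'/\Q(T)}\bigl(\varphi_i/(c_{\varphi,\psi}F)^{1/de}\bigr)$ occurring in \eqref{key2} lie in $\Q(T)$, and the normalising factors $c_{\varphi,\psi}$ and $F$ were chosen precisely so that the contribution of every non-archimedean place vanishes (as verified before the theorem), leaving genuine integer polynomials $\Phi,\Psi\in\Z[T]$. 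For such a polynomial the circle integral $\int_0^1\log|\Phi|_t\,dt=\int_0^1\log|\Phi(e^{2\pi i t})|\,dt$ is its Mahler measure $m(\Phi)$, which is at least the absolute constant $c$ by Lehmer's conjecture in polynomial form (Conjecture \ref{Conjecture 1.2.}) as soon as $\Phi$ is not a product of cyclotomic polynomials. Hence \eqref{key2} gives $h_S(\varphi)\geq\max(m(\Phi),m(\Psi))/\deg(\varphi)\geq c/\deg(\varphi)$.

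The hard part will be the implication just invoked: that if $\varphi$ is not a root of unity, then at least one of $\Phi,\Psi$ fails to be a product of cyclotomic polynomials. I would argue by contraposition: if both $\Phi$ and $\Psi$ were generalised cyclotomic, their Mahler measures would vanish, and I would then reproduce the argument of Theorem \ref{Kronecker} — via the identity expressing $\int_0^1\log\max(|\cdot|_t,|\cdot|_t)\,dt$ as a two-variable integral together with the Kronecker--Boyd--Lawton--Smyth criterion — to force $h_S(\varphi)=0$, i.e. that $\varphi$ is a root of unity, contrary to assumption. The one genuinely technical point is tracking how the norm $\mathrm{N}_{K'/\Q(T)}$ and the factors $c_{\varphi,\psi},F$ interact with the cyclotomic and torsion structure; everything else reduces to the already-established inequality \eqref{key2} and the dictionary between $h_S$, Mahler measures, and Weil heights.
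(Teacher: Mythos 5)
Your route is the same as the paper's: the easy implication via Remark~\ref{rmk5.2} (plus the preservation of degree under the purely transcendental extension $\Q\subset\Q(T)$, which you justify correctly), and the substantive implication via the inequality \eqref{key2}, reading the circle integrals of the norms as Mahler measures of integer polynomials and invoking Lehmer in the polynomial form of Conjecture~\ref{Conjecture 1.2.}. You have also correctly isolated the point on which everything hinges, namely that Conjecture~\ref{Conjecture 1.2.} only applies if at least one of the norm polynomials $\Phi,\Psi$ is not a product of cyclotomic polynomials (times a monomial). The paper's own proof is a one-line assertion (``from \eqref{key2} and assuming Lehmer's problem is true, we get $h_S(\varphi)\geq c/\deg(\varphi)$'') and glides over exactly this point.

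However, your proposed repair of that point fails, and the gap is genuine. The inequality \eqref{key2} is one-sided: it bounds $h_S$ \emph{from below} by the norm Mahler measures, so the vanishing of $m(\Phi)$ and $m(\Psi)$ cannot ``force $h_S(\varphi)=0$'' --- that deduction would need the reverse inequality, which is false because the norm destroys too much information. Concretely, take $\varphi=3+\sqrt{10}$, an algebraic unit which is not a root of unity: here one may take $\varphi_1=1$, $\varphi_2=\varphi$, so $f_d=-1$, $g_e=-1$ (from the minimal polynomial $x^2-6x-1$), hence $c_{\varphi,\psi}=1$, $F=1$, and both norms $\mathrm{N}_{K'/\Q(T)}(\varphi_1)=1$ and $\mathrm{N}_{K'/\Q(T)}(\varphi_2)=-1$ have zero Mahler measure. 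The left-hand side of \eqref{key2} is therefore $0$, while $h_S(\varphi)=h(3+\sqrt{10})=\tfrac12\log(3+\sqrt{10})>0$. So ``both $\Phi$ and $\Psi$ generalized cyclotomic'' does \emph{not} imply ``$\varphi$ is a root of unity,'' and your contraposition collapses; in particular the argument of Theorem~\ref{Kronecker}, which rests on the exact equality $h_S([\varphi:\psi])=\int_0^1\log\max(|\varphi|_t,|\psi|_t)\,dt$ for coprime $\varphi,\psi\in\Z[T]$, does not transport to the norms in \eqref{key2}. In this specific example the desired bound is rescued by Remark~\ref{rmk5.2} and classical Lehmer, since $\varphi\in\overline{\Q}$; but for a general $\varphi\in\overline{\Q(T)}$ whose norm polynomials are cyclotomic you would need a different device (for instance running the construction on $[1:\varphi-\lambda]$ for auxiliary $\lambda$, or on powers of $\varphi$), and \eqref{key2} alone does not supply it. To be fair, the paper's own one-line deduction carries the same lacuna; your draft has the merit of making it visible, but as written it does not close it.
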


\bigskip 

\noindent\textbf{Acknowledgement:} I would like to thank the referee for their insightful comments and suggestions.

\bibliographystyle{plain}

\bibliography{biblio}

\begin{thebibliography}{10}

\bibitem{AD1999}
Francesco Amoroso and Sinnou David.
\newblock Le probl\`eme de {L}ehmer en dimension sup\'{e}rieure.
\newblock {\em J. Reine Angew. Math.}, 513:145--179, 1999.

\bibitem{AD2000}
Francesco Amoroso and Sinnou David.
\newblock Minoration de la hauteur normalis\'{e}e des hypersurfaces.
\newblock {\em Acta Arith.}, 92(4):339--366, 2000.

\bibitem{AD2003}
Francesco Amoroso and Sinnou David.
\newblock Minoration de la hauteur normalis\'{e}e dans un tore.
\newblock {\em J. Inst. Math. Jussieu}, 2(3):335--381, 2003.

\bibitem{Amoroso-Dvornicich}
Francesco Amoroso and Roberto Dvornicich.
\newblock A lower bound for the height in abelian extensions.
\newblock {\em J. Number Theory}, 80(2):260--272, 2000.

\bibitem{Baker2003}
Matthew~H. Baker.
\newblock Lower bounds for the canonical height on elliptic curves over abelian
  extensions.
\newblock {\em Int. Math. Res. Not.}, (29):1571--1589, 2003.

\bibitem{BS2004}
Matthew~H. Baker and Joseph~H. Silverman.
\newblock A lower bound for the canonical height on abelian varieties over
  abelian extensions.
\newblock {\em Math. Res. Lett.}, 11(2-3):377--396, 2004.

\bibitem{Boyd}
David~W. Boyd.
\newblock Kronecker's theorem and {L}ehmer's problem for polynomials in several
  variables.
\newblock {\em J. Number Theory}, 13(1):116--121, 1981.

\bibitem{Burgos2}
Jos{\'e}~Ignacio Burgos~Gil, Patrice Philippon, and Mart{\'{\i}}n Sombra.
\newblock Arithmetic geometry of toric varieties. {M}etrics, measures and
  heights.
\newblock {\em Ast\'erisque}, (360):vi+222, 2014.

\bibitem{Burgos2016}
Jos\'{e}~Ignacio Burgos~Gil, Patrice Philippon, and Mart\'{\i}n Sombra.
\newblock Height of varieties over finitely generated fields.
\newblock {\em Kyoto J. Math.}, 56(1):13--32, 2016.

\bibitem{Carrizosa-2009}
Mar\'ia Carrizosa.
\newblock Petits points et multiplication complexe.
\newblock {\em Int. Math. Res. Not. IMRN}, (16):3016--3097, 2009.

\bibitem{Carrizosa-survey}
Mar\'ia Carrizosa.
\newblock Survey on {L}ehmer problems.
\newblock {\em S\~ao Paulo J. Math. Sci.}, 3(2):317--327, 2009.

\bibitem{Huayi-Atsushi-Adelic}
Huayi Chen and Atsushi Moriwaki.
\newblock {\em Arakelov geometry over adelic curves}, volume 2258 of {\em
  Lecture Notes in Mathematics}.
\newblock Springer, Singapore, [2020] \copyright 2020.

\bibitem{David-elliptic}
Sinnou David.
\newblock Points de petite hauteur sur les courbes elliptiques.
\newblock {\em J. Number Theory}, 64(1):104--129, 1997.

\bibitem{David-Pacheco}
Sinnou David and Am\'{\i}lcar Pacheco.
\newblock Le probl\`eme de {L}ehmer ab\'{e}lien pour un module de {D}rinfeld.
\newblock {\em Int. J. Number Theory}, 4(6):1043--1067, 2008.

\bibitem{DP}
Sinnou David and Patrice Philippon.
\newblock Minorations des hauteurs normalis\'ees des sous-vari\'et\'es des
  tores.
\newblock {\em Ann. Scuola Norm. Sup. Pisa Cl. Sci. (4)}, 28(3):489--543, 1999.

\bibitem{DP-2007}
Sinnou David and Patrice Philippon.
\newblock Minorations des hauteurs normalis\'ees des sous-vari\'et\'es des
  puissances des courbes elliptiques.
\newblock {\em Int. Math. Res. Pap. IMRP}, (3):Art. ID rpm006, 113, 2007.

\bibitem{Denis}
Laurent Denis.
\newblock Hauteurs canoniques et modules de {D}rinfeld.
\newblock {\em Math. Ann.}, 294(2):213--223, 1992.

\bibitem{Dobrowolski}
E.~Dobrowolski.
\newblock On a question of {L}ehmer and the number of irreducible factors of a
  polynomial.
\newblock {\em Acta Arith.}, 34(4):391--401, 1979.

\bibitem{Goss}
David Goss.
\newblock {\em Basic structures of function field arithmetic}, volume~35 of
  {\em Ergebnisse der Mathematik und ihrer Grenzgebiete (3) [Results in
  Mathematics and Related Areas (3)]}.
\newblock Springer-Verlag, Berlin, 1996.

\bibitem{Hindry-Silverman-elliptic}
M.~Hindry and J.~H. Silverman.
\newblock The canonical height and integral points on elliptic curves.
\newblock {\em Invent. Math.}, 93(2):419--450, 1988.

\bibitem{Lang-FDG}
Serge Lang.
\newblock {\em Fundamentals of {D}iophantine geometry}.
\newblock Springer-Verlag, New York, 1983.

\bibitem{Lang-ANT}
Serge Lang.
\newblock {\em Algebraic number theory}, volume 110 of {\em Graduate Texts in
  Mathematics}.
\newblock Springer-Verlag, New York, second edition, 1994.

\bibitem{Lawton-Kronecker}
Wayne~M. Lawton.
\newblock A generalization of a theorem of kronecker.
\newblock {\em J. Sci. Fac. Chiang-mai Univ.}, 4(4):15--23, 1977.

\bibitem{Moriwaki1}
Atsushi Moriwaki.
\newblock Arithmetic height functions over finitely generated fields.
\newblock {\em Invent. Math.}, 140(1):101--142, 2000.

\bibitem{Silverman-lower}
Joseph~H. Silverman.
\newblock A lower bound for the canonical height on elliptic curves over
  abelian extensions.
\newblock {\em J. Number Theory}, 104(2):353--372, 2004.

\bibitem{Smyth}
C.~J. Smyth.
\newblock On measures of polynomials in several variables.
\newblock {\em Bull. Austral. Math. Soc.}, 23(1):49--63, 1981.

\bibitem{Ullmo1998}
Emmanuel Ullmo.
\newblock Positivit\'{e} et discr\'{e}tion des points alg\'{e}briques des
  courbes.
\newblock {\em Ann. of Math. (2)}, 147(1):167--179, 1998.

\bibitem{Vojta-2021}
Paul Vojta.
\newblock Roth's theorem over arithmetic function fields.
\newblock {\em Algebra Number Theory}, 15(8):1943--2017, 2021.

\bibitem{Zhang1998}
Shou-Wu Zhang.
\newblock Equidistribution of small points on abelian varieties.
\newblock {\em Ann. of Math. (2)}, 147(1):159--165, 1998.

\end{thebibliography}

\end{document}